\documentclass{article}
\usepackage[utf8]{inputenc}
\usepackage[left=1in,top=1in,right=1in,bottom=1in]{geometry}
\usepackage[linktocpage=true]{hyperref}
\usepackage{setspace}
\usepackage{amssymb, amsmath, amsthm, graphicx,mathrsfs}
\usepackage{caption,cite}
\usepackage{mathtools,color}

\usepackage{cleveref}
\usepackage{tikz}
\usetikzlibrary{shapes.geometric, backgrounds, positioning, arrows.meta,calc}

\newtheorem{thm}{Theorem}[section]

\newtheorem{defn}[thm]{Definition}

\newtheorem{lem}[thm]{Lemma}
\newtheorem{conj}[thm]{Conjecture}

\newtheorem{example}[thm]{Example}

\newcommand{\F}{\mathcal{F}}
\newcommand{\cH}{\mathcal{H}}
\renewcommand{\P}{\mathcal{P}}
\newcommand{\B}{\mathcal{B}}
\newcommand{\eps}{\varepsilon}
\renewcommand{\l}{\left}
\renewcommand{\r}{\right}

\title{Matching and intersection problems for\\ non-trivial $r$-partite $r$-uniform hypergraphs}
\author{Peter Frankl\footnote{Alfréd R\'{e}nyi Institute of Mathematics, Budapest, Hungary. Email: \texttt{frankl.peter@renyi.hu}.}\quad\quad
Jiaxi Nie\footnote{Corresponding Author. School of Mathematics, Georgia Institute of Technology. Email: \texttt{jnie47@gatech.edu}.}}

\begin{document}

\maketitle

\begin{abstract}

A central theme in extremal combinatorics is the study of the maximum number of edges in an $r$-uniform hypergraph ($r$-graph) with matching number at most $s$ (the Erd\H{o}s Matching Conjecture) or with pairwise intersection at least $t$ (the $t$-intersection problem).
The maximum sizes for these problems are typically achieved by trivial constructions: for the matching problem, the extremal construction consists of all edges intersecting a fixed set of $s$ vertices, while for the intersection problem, it consists of all edges containing a fixed set of $t$ vertices.

In this paper, we investigate the \emph{non-trivial} $r$-partite $r$-graphs where each part is of size $n$. We determine the exact bounds for both the matching problem and the intersection problem when $n$ is sufficiently large. 
Furthermore, for the intersection problem, we resolve the cases $t=1$ and $t=r-2$ for all $n \ge 2$. Our results partially confirm a conjecture of Lu and Ma (``Matching Stability for 3‐Partite 3‐Uniform Hypergraphs.'' Journal of Graph Theory (2026)). 
\end{abstract}

\medskip

% \noindent\textbf{Keywords: $r$-partite $r$-uniform hypergraph, Erd\H{o}s Matching Conjecture, Erd\H{o}s-Ko-Rado Theorem, Hilton-Milner Theorem}

\section{Introduction}
Let integers $r \ge 2$, $n_1 \ge n_2 \ge \dots \ge n_r \ge 2$, and let $X_\ell = \{1, 2, \dots, n_\ell\}, 1 \le \ell \le r$. 
An \emph{$r$-partite $r$-graph} with partite sets $X_1, \dots, X_r$ is a family $\mathcal{F} \subseteq X_1 \times \dots \times X_r = \{(x_1, \dots, x_r) : x_\ell \in X_\ell, \, 1 \le \ell \le r\}$. 
We refer to the full product $X_1 \times \dots \times X_r$ as the \emph{complete $r$-partite $r$-graph}. 
To visualize this $r$-graph the reader might want to consider $X_1,\dots,X_r$ as pairwise disjoint sets, i.e. copies of $[n_\ell]=\{1,\dots,n_\ell\}$.
In this setting, an edge $A \in \mathcal{F}$ corresponds to a set $A \subseteq X_1 \cup \dots \cup X_r$ such that $|A \cap X_\ell| = 1$ for all $1 \le \ell \le r$.

For a vector $A \in X_1 \times \dots \times X_r$ and an index $1 \le \ell \le r$, let $A[\ell]$ denote the $\ell$-th coordinate of $A$. 
Two edges $A$ and $B$ are \emph{disjoint} if $A[\ell] \neq B[\ell]$ for all $1 \le \ell \le r$. 
A collection of pairwise disjoint edges is called a \emph{matching}. 
The \emph{matching number} of a family $\mathcal{F}$, denoted by $\nu(\mathcal{F})$, is the maximum size of a matching contained in $\mathcal{F}$. 
A subset $T \subseteq X_1 \cup \dots \cup X_r$ is called a \emph{transversal} of $\mathcal{F}$ if every edge in $\mathcal{F}$ contains at least one vertex from $T$. 
For example, any partite set $X_\ell$ is a transversal. 
The minimum size of a transversal is denoted by $\tau(\mathcal{F})$.

The classical K\"onig's Theorem~\cite{Konig1931} asserts that for $r=2$, $\nu(\mathcal{F}) = \tau(\mathcal{F})$. 
For $r\ge 3$ the situation is much more complicated. The corresponding extremal problem can be stated as follows. 

\begin{defn}
For integers $r \ge 2$ and $n_1 \ge \dots \ge n_r\ge s\ge 1$, let $m(s, n_1, \dots, n_r)$ denote the maximum size of a family $\mathcal{F} \subseteq X_1 \times \dots \times X_r$ satisfying $\nu(\mathcal{F}) \le s$. 
In the symmetric case where $n_1 = \dots = n_r = n$, we write $m(s, n; r) = m(s, \underbrace{n, \dots, n}_{r})$.
\end{defn}

Meyer\cite{meyer1974quelques} proved $m(1,n;r)=n^{r-1}$ and it was generalized by Deza and Frankl~\cite{deza1983erdos} to $m(1,n_1,\dots,n_r)\le \prod_{\ell=1}^{r-1}n_\ell$. In~\cite{frankl2012disjoint}, Frankl proved for all $s\le n_r$
\begin{equation}\label{eq:s-matchingEKR}
m(s,n_1,\dots,n_r)\le s\prod_{\ell=1}^{r-1}n_\ell.
\end{equation}

It should be clear that these bounds are best possible. Namely, fixing $T\subset X_r$ with $|T|=s$ and setting $\F_T=\{(x_1,\dots,x_r)\in X_1\times\dots\times X_r: x_r\in T\}$ provides equality in (\ref{eq:s-matchingEKR}). As $\tau(\F_T)=s$, it is natural to consider the following notion.

\begin{defn}
For integers $r\ge 3$ and $n_1\ge n_2\ge\dots\ge n_r>s\ge 1$, let $m_0(s, n_1, \dots, n_r)$ denote the maximum size of a family $\mathcal{F} \subseteq X_1 \times \dots \times X_r$ satisfying $\nu(\mathcal{F}) \le s<\tau(\F)$. In the symmetric case where $n_1 = \dots = n_r = n$, we write $m_0(s,n;r)=m_0(s,\underbrace{n,\dots,n}_{r})$.
\end{defn}

The corresponding $r$-partite $r$-graphs are often called \textit{non-trivial}. It should be mentioned that the analogous problem for $\binom{[n]}{r}$, that is, for $r$-subsets of an $n$-set has been widely investigated. In particular the classical Erd\H{o}s-Ko-Rado Theorem~\cite{ErdosKoRado1961} states that $|\F|\le \binom{n-1}{r-1}$ for $\F\subset\binom{[n]}{r}$ if $n\ge 2r$ and $\nu(\F)=1$. Further imposing the restriction $\tau(\F)>1$, Hilton and Milner~\cite{HiltonMilner1967} established the best possible bound
\begin{equation*}
|\F|\le \binom{n-1}{r-1}-\binom{n-r-1}{r-1}+1.
\end{equation*}

The full solution of the corresponding problem for $s\ge 2$ would include the resolution of the Erd\H{o}s Matching Conjecture~\cite{Erdos1965Matching} and is not in sight. 

However the case of $r$-partite $r$-graphs might be easier. In a recent paper Lu and Ma~\cite{lu2024matching} proposed the following conjecture.

\begin{conj}[Conjecture 1.4 in \cite{lu2024matching}]\label{conj:matching}
Let $n$ be sufficiently large, $r\ge 4$ and $s<n$. Then
\begin{equation}\label{eq:conjecture}
m_0(s,n,\dots,n)=sn^{r-1}-(n-1)^{r-1}+n-s.
\end{equation}
\end{conj}

For $F\in\F$ define its \textit{projection} $\P(F)=\{\ell:F[\ell]=1\}$. Further, define $\mathcal{P}(\mathcal{F})=\{\mathcal{P}(F):F\in\F\}$.

\begin{defn}\label{defn:extension}
Let integers $r\ge 3$ and $n_1\ge \dots\ge n_r\ge 2$. Given a hypergraph $\mathcal{H} \subseteq 2^{[r]}$, we define the family $\mathcal{H}(n_1, \dots, n_r) \subseteq [n_1] \times \dots \times [n_r]$ such that a vector $F$ belongs to $\mathcal{H}(n_1, \dots, n_r)$ if and only if there exists an edge $E \in \mathcal{H}$ satisfying $E \subseteq \mathcal{P}(F)$. 
In the symmetric case where $n_1 = \dots = n_r = n$, we write $\mathcal{H}(n) := \mathcal{H}(\underbrace{n,\dots,n}_{r})$.
\end{defn}
In other words, $\cH(n_1,n_2,\dots,n_r)$ is the maximum $r$-partite $r$-graph in $[n_1]\times\dots\times[n_r]$ such that its projection contains $\cH$. 

For $r\ge 3$, let
$$
W_r=\{\{\ell,r\}:1\le \ell\le r-1\}\cup\{\{1,\dots,r-1\}\}.
$$

The construction corresponding to Conjecture \ref{conj:matching} is building upon $W_r(n)$, the natural analogue of the Hilton-Milner $r$-graph.

\begin{example}
Let integer $r\ge 3$ and let $X_1,\dots,X_r$ be disjoint copies of $[n]$. Let $\mathcal{E}(r,s,n)$ consist of the following sequences $(a_1,\dots,a_r)$: all sequences with $a_r\le s$ except for $(a_1,\dots,a_r)$ satisfying $a_1,\dots,a_{r-1}\ge 2$ and $a_r=1$ along with all $(1,1,\dots,1,b_r)$ with $s+1\le b_r\le n$. Alternatively,
$$
\mathcal{E}(r,s,n)=W_r(n)\cup(\bigcup_{i=2}^s (X_1\times\dots\times X_{r-1}\times \{i\})).
$$
\end{example}

Lu and Ma~\cite{lu2024matching} confirmed the statement of~Conjecture \ref{conj:matching} for $r=3$; more precisely, they proved (\ref{eq:conjecture}) for $r=3$, $s\ge 1$ and $n> \max\{162,s\}$. In this paper we confirm~Conjecture \ref{conj:matching} for $r\ge 3$ and $s=1$ in a stronger sense.

\begin{thm}[Proof is in Section~\ref{sec:shifting}]\label{thm:s=1}
For integers $r\ge 3$ and $n\ge 2$,
\begin{equation}
m_0(1,n,\dots,n)=n^{r-1}-(n-1)^{r-1}+n-1.
\end{equation}
\end{thm}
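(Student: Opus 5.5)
The plan is to reduce the problem to the projection family via shifting, in the spirit of the section title. Let $\F\subseteq[n]^r$ be an intersecting family (so $\nu(\F)\le1$) with $\tau(\F)\ge2$, and assume $|\F|$ is maximal. I would apply, in each coordinate $\ell$ separately, the standard shift $S_{\ell}^{j\to1}$. This shift replaces $F$ by the vector obtained from $F$ by setting $F[\ell]=1$ whenever $F[\ell]=j$ and the new vector is not already in $\F$.

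Such shifts preserve $|\F|$ and the intersecting property. The danger is that they may create a transversal of size one. So I would shift only while $\tau\ge2$ is kept. If some shift $S_\ell^{j\to1}$ would make $\{1\in X_k\}$ a transversal, then before the shift every edge missing the vertex $1\in X_k$ meets $\{1,j\}\subset X_\ell$ when $k=\ell$, or has a very restricted form otherwise. In that situation I would argue directly, as in the Hilton--Milner shifting proofs, that $|\F|$ is at most the target bound. Once no further shift applies, $\F$ is determined by its projection: it is an up-set of the form $\cH(n)$, where $\cH=\P(\F)$ is an up-closed family on $[r]$.

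For $\cH(n)$ to be intersecting, $\cH$ must be intersecting as a set system on $[r]$. The reason is that two vectors whose $1$-coordinates are disjoint can be chosen with distinct entries elsewhere, using $n\ge2$. The condition $\tau\ge2$ means no single coordinate $\ell$ lies in all members of $\cH$; note that vertices other than $1$ are irrelevant after shifting. The size is
\[
|\cH(n)|=\sum_{S\in\cH}(n-1)^{r-|S|}.
\]
So the problem reduces to maximizing this weighted count over non-trivial intersecting up-sets $\cH\subseteq2^{[r]}$. The weights $(n-1)^{r-|S|}$ decrease with $|S|$, and $W_r$ gives exactly $n^{r-1}-(n-1)^{r-1}+n-1$ after taking the up-closure.

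The main step is this weighted Hilton--Milner inequality on $2^{[r]}$, and it must hold for every $n\ge2$. I would assume $\cH$ is maximal and pair each set $S$ with its complement $[r]\setminus S$, of which at most one lies in $\cH$. Since $\cH$ is a maximal intersecting family, exactly one set of each complementary pair lies in $\cH$. Thus the sum runs over a choice of one set from each of the $2^{r-1}$ pairs, and we want the heavier side, meaning the smaller set, as often as possible. A star at $r$ takes the smaller set everywhere except in the pairs of equal size. Non-triviality forces some set avoiding $r$ into $\cH$, and by up-closure we may take a minimal such set $A$. I would then bound the loss relative to the star, which is $\sum(n-1)^{r-|S|}-(n-1)^{|S|}$ over the forced swaps, and show it is minimized by $A=[r-1]$. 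In that case only the pairs $(\{r\},[r-1])$ and $(S\ni r, \text{complement})$ with $S\cap[r-1]$ nonempty are affected, giving exactly $W_r$.

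The hard part is the comparison for general minimal $A$ with $|A|$ small, which forces many complements to be included. There I would induct on $r$ by splitting $\cH$ according to whether sets contain $r$. The boundary-case analysis from the first paragraph is where I expect the most care to be needed.
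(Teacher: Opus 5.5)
Your architecture matches the paper's: shift, reduce a fully shifted maximal family to $\cH(n)$ with $\cH=\P(\F)$ a non-trivial intersecting up-set, then maximize $\sum_{S\in\cH}(n-1)^{r-|S|}$ using complementary pairs. However, both steps that carry the content are left open.

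\textbf{The shift-resistant case.} A shift in coordinate $\ell$ never changes another coordinate, so your ``$k\neq\ell$'' case cannot occur. The only danger is that coordinate $\ell$ itself becomes fixed. In that case every edge has $F[\ell]\in\{1,j\}$, both values occur, and no two edges agree outside coordinate $\ell$. Saying you would ``argue directly as in Hilton--Milner'' does not give a bound, and one is genuinely needed here.

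The paper keeps shifting the other coordinates until each coordinate is either shifted or resistant. It then shows at least two coordinates must be resistant. If only coordinate $1$ were, shiftedness of the others would put both $(1,1,\dots,1)$ and $(x,1,\dots,1)$ in $\F$, so the $(1\leftarrow x)$-shift would fix an edge, contradicting resistance. Finally it counts: each resistant coordinate takes only two values, and coordinate $1$ is determined by the others. This gives $|\F|\le 2^{b-1}n^{r-b}\le 2n^{r-2}$, which is at most $n^{r-1}-(n-1)^{r-1}+n-1$ for $n\ge 2$. Alternatively, at the first resistant shift you could bound $|\F|$ by a sum inequality for the two non-empty cross-intersecting slices $F[\ell]=1$ and $F[\ell]=j$ in $[n]^{r-1}$. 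That inequality would itself need a proof.

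\textbf{The weighted inequality on $2^{[r]}$.} Your claim that the star at $r$ ``takes the smaller set everywhere except in the pairs of equal size'' is false. For the pair $\{1\}$, $[r]\setminus\{1\}$, the star takes the larger set. So deviations from the star are not all losses: replacing a large set containing $r$ by its small complement is a gain. Hence ``bounding the loss over the forced swaps'' is not a valid comparison, and the step you call hard is left to an unspecified induction. Also, the up-closure of $W_r$ differs from the star at $r$ only in the pair $\{r\}$, $[r-1]$.

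The missing ingredient is a level-by-level count. Let $\rho_i$ be the number of $i$-sets in $\cH$. Then:
complementary pairs give $\rho_i+\rho_{r-i}\le\binom{r}{i}$;
the Erd\H{o}s--Ko--Rado theorem on $[r]$ gives $\rho_i\le\binom{r-1}{i-1}$ for $2\le i\le r/2$;
non-triviality gives $\rho_0=\rho_1=0$.
Since $(n-1)^{r-i}\ge(n-1)^{i}$ for $i\le r/2$,
\[
\rho_i(n-1)^{r-i}+\rho_{r-i}(n-1)^{i}\le \binom{r-1}{i-1}(n-1)^{r-i}+\binom{r-1}{i}(n-1)^{i}.
\]
Sum this over $2\le i<r/2$, bound the middle level (if any) directly by Erd\H{o}s--Ko--Rado, and add $\rho_{r-1}\le r$ and $\rho_r\le 1$. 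The total is exactly $n^{r-1}-(n-1)^{r-1}+n-1$ for every $n\ge 2$. No minimal set $A$ and no induction on $r$ is needed.
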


We also confirm~Conjecture \ref{conj:matching} for any fixed $r\ge3$ and $s\ge 2$.
\begin{thm}[Proof is in Section~\ref{sec:sunflower}]\label{thm:matching_n_large}
For integers $r\ge 3$ and $s\ge 2$, there exists $n(r,s)$ such that if $n>n(r,s)$, then
\begin{equation}
m_0(s,n,\dots,n)=sn^{r-1}-(n-1)^{r-1}+n-s.
\end{equation}
\end{thm}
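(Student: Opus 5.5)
The lower bound comes from the construction $\mathcal{E}(r,s,n)$, and the main work is the upper bound via a delta-system (sunflower) argument.

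\textbf{Lower bound.} I would first check that $\nu(\mathcal{E}(r,s,n))\le s<\tau(\mathcal{E}(r,s,n))$ and count its edges.

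For the count, the layers $a_r=2,\dots,s$ contribute $(s-1)n^{r-1}$ edges. The layer $a_r=1$ contributes $n^{r-1}-(n-1)^{r-1}$ edges, namely those with some $a_\ell=1$ for $\ell\le r-1$. The edges $(1,\dots,1,b_r)$ with $b_r>s$ contribute $n-s$ more. This totals $sn^{r-1}-(n-1)^{r-1}+n-s$.

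For the matching bound, all edges in layers $2,\dots,s$ together with those in layer $1$ use at most one edge per value of $a_r\le s$. Any edge of $W_r(n)$ contains the vertex $1\in X_\ell$ for some $\ell\le r-1$, or it is $(1,\dots,1,b)$. Hence $W_r(n)$ is intersecting, so a matching uses at most one edge of $W_r(n)$, which gives $\nu\le s$.

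For non-triviality, a transversal of size $s$ would have to cover the $s-1$ full layers and $W_r(n)$. Since $\tau(W_r(n))\ge 2$, a counting check shows that no $s$ vertices suffice. Indeed, covering the full layer $a_r=i$ requires the vertex $i\in X_r$ or at least $n$ vertices.

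\textbf{Upper bound.} Let $\F$ satisfy $\nu(\F)\le s<\tau(\F)$ with $n$ large.

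The first step is to extract a \emph{kernel}. Following the Frankl-style delta-system method, I would call a vertex $v$ \emph{heavy} if the link $\F(v)$ has matching number larger than, say, $r(s+1)$ within the product on the remaining parts. Let $H$ be the set of heavy vertices. Every edge of $\F$ that avoids $H$ lies in a family whose every link has bounded matching number. By Frankl's bound~\eqref{eq:s-matchingEKR} applied to links, or by a standard sunflower/greedy argument, there are only $O_{r,s}(n^{r-2})$ such edges.

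A heavy vertex can be greedily added to any matching avoiding it, so $|H|\le s$. Moreover, since the "light" part contributes only $O(n^{r-2})$, the bound $|\F|\ge sn^{r-1}-(n-1)^{r-1}$, which is roughly $(s-1)n^{r-1}+(r-1)n^{r-2}$, forces $|H|=s$. Otherwise $|\F|\le (s-1)n^{r-1}+O(n^{r-2})$, and the constant must be compared carefully. This is where the stronger weighting is needed: edges through $s-1$ heavy vertices number at most $(s-1)n^{r-1}$, and the remainder must form a family with $\nu\le 1$ relative to these vertices.

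The second step is to reduce to $s=1$. With $H=\{h_1,\dots,h_s\}$ heavy and $\tau(\F)>s$, some edge $F_0$ avoids $H$. Any matching of $\F-H$ of size $2$ combined with $s-1$ edges through distinct heavy vertices (chosen disjoint greedily, which is possible by heaviness) would give $s+1$ disjoint edges. Thus the edges avoiding $s-1$ of the heavy vertices, appropriately chosen, form a family with $\nu\le1$.

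More precisely, I would distinguish cases. In the first case, the heavy vertices all lie in a common part, which we may take to be $X_r$. Then $\F'=\{F\in\F: F\cap H\subseteq\{h_1\}\}$ is intersecting and non-trivial (because $\tau(\F)>s$). Theorem~\ref{thm:s=1} then gives $|\F'|\le n^{r-1}-(n-1)^{r-1}+n-1$. Since the remaining edges number at most $(s-1)n^{r-1}$, minus edges already counted, this yields the bound, with the slack $n-1$ versus $n-s$ absorbed by edges through $h_1$ also containing other $h_i$... I would need to handle this bookkeeping with care. In the second case, the heavy vertices lie in different parts. Then the edges through two heavy vertices overlap, losing about $n^{r-2}$ per pair, and a direct count shows $|\F|$ is smaller than the target for large $n$.

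\textbf{Main obstacle.} I expect the hardest step to be the exact bookkeeping in the first case, obtaining $n-s$ rather than $n-1$. I would handle it by choosing $h_1$ to be a heavy vertex whose "private" nontrivial part is maximal. I would then argue that edges $(1,\dots,1,b)$ with $b\in H\setminus\{h_1\}$ are already counted in the full layers, so that only the $b\notin H$ contribute, giving exactly $n-s$.
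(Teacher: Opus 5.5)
Your heavy-vertex framework is a reasonable alternative to the paper's base-and-shrinking argument. As written, though, the case analysis sits in a case that cannot occur, and the step that produces the exact constant is missing.

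First, $|H|=s$ is impossible, not forced. Since $\tau(\F)>s$, some edge $F_0$ avoids $H$. Heaviness (with threshold at least $(r+1)s$) lets you greedily pick edges through $h_1,\dots,h_s$ that avoid $F_0$, $H$ and each other, which gives $s+1$ disjoint edges. For the same reason your $\F'=\{F\in\F:F\cap H\subseteq\{h_1\}\}$ is not intersecting, since $h_1$ has an edge disjoint from $F_0$. What your light-edge count actually gives is $|H|=s-1$. Edges avoiding $H$ have matching number at most $s-|H|$ and contain only light vertices, so there are $O(n^{r-2})$ of them, which rules out $|H|\le s-2$. The correct object is then $\F_0=\{F\in\F:F\cap H=\emptyset\}$. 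It is intersecting, since two disjoint edges plus $s-1$ greedy ones would give $s+1$ disjoint edges. It is non-trivial, since a one-vertex cover together with $H$ would be a transversal of size $s$. With this correction, your direct count does dispose of the case where $H$ meets two parts.

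The genuine gap is the exact constant when $H\subseteq X_r$. Then $\F_0\subseteq X_1\times\dots\times X_{r-1}\times(X_r\setminus H)$, a product with part sizes $n,\dots,n,n-s+1$, and you need $|\F_0|\le n^{r-1}-(n-1)^{r-1}+n-s$. That is the $s=1$ bound for \emph{unequal} part sizes. Theorem~\ref{thm:s=1} covers only equal parts and gives $n-1$, an excess of $s-1$. Your proposed repair, treating the edges $(1,\dots,1,b)$ with $b\in H$ as already counted, presupposes that $\F_0$ has the shape of $W_r(n)$, which is exactly what must be proved.

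This is why the paper proves the stronger Theorem~\ref{thm:matching_n_large'} for $n_1\ge\dots\ge n_r$ with $n_1\le(1+\delta)n_r$. It bounds the size of a base $\B$ via Erd\H{o}s--Rado and shows $\rho_1=s-1$. It then inducts on $s$ by deleting a singleton $\{y\}\in\B$, which shrinks one part by one vertex. The base case $s=1$ is settled for unequal parts by showing the $2$-sets of $\B$ form a star or a triangle.

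The unequal-part problem is not a formality. For $r=4$ the triangle family of size $n_1(n_2+n_3+n_4-2)$ competes with $W_4$, and one has to check that it loses for parts $(n,n,n,n-s+1)$. To finish along your route you must supply this asymmetric $s=1$ bound.
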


Moreover, in the case when $r=3$ and $s=1$, we solve the problem when the parts are not necessarily of the same size.

\begin{thm}[Proof is in Section~\ref{sec:r-2}]\label{thm:3-partite intersecting}
For all integers $n_1\ge n_2\ge n_3\ge 2$, 
$$
m_0(1,n_1,n_2,n_3)=n_1+n_2+n_3-2.
$$
\end{thm}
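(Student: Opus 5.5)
The plan is to prove the lower bound by a construction and the upper bound by a counting argument built around a non-covering triangle.

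\textbf{Lower bound.} Consider the family
\[
\{(1,1,x): x\in X_3\}\cup\{(1,y,1): y\in X_2\}\cup\{(z,1,1): z\in X_1\}.
\]
It consists of all triples with at least two coordinates equal to $1$, and it has $n_1+n_2+n_3-2$ members, since $(1,1,1)$ is counted three times. Any two such triples share a coordinate equal to $1$ in the same position, by pigeonhole on the two sets of positions (each of size $\ge 2$ out of $3$). So the family is intersecting.

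The family is also non-trivial. A single vertex $1\in X_\ell$ misses the triple with the other two coordinates equal to $1$ and coordinate $\ell$ equal to $2$; this triple exists because all $n_\ell\ge 2$. A vertex $x\neq 1$ covers at most one of these lines, so it is not a transversal either.

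\textbf{Upper bound.} Let $\F$ be intersecting with $\tau(\F)\ge 2$.

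First I would treat the case in which two edges of $\F$ agree in two coordinates, say $A=(a,b,c)$ and $B=(a,b,c')$ with $c\neq c'$. Since $\tau(\F)\ge 2$, the vertex $a$ is not a transversal, so some edge $C$ avoids $a$. Similarly some edge $D$ avoids $b$. Every edge must meet both $A$ and $B$, so an edge not containing $a$ or $b$ must contain both $c$ and $c'$, which is impossible. Hence every edge contains $a$ or $b$.

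Now count the edges through $a$ but not $b$. Such an edge has the form $(a,y,z)$ with $y\neq b$, and it must meet $D$. I want to show these edges are essentially forced. Fix an edge $C=(x,b,w)$ with $x\neq a$. An edge $(a,y,z)$ with $y\ne b$ must meet $C$ in coordinate $3$, so $z=w$. Thus edges through $a$ but not $b$ lie in $\{(a,y,w)\}$, provided all edges avoiding $a$ share the same third coordinate $w$.

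If instead two edges avoiding $a$ have different third coordinates $w\ne w'$, then no edge $(a,y,z)$ with $y\neq b$ exists at all. In that case every edge contains $b$, contradicting $\tau\ge 2$. So all edges not through $a$ have the form $(x,b,w)$, and symmetrically all edges not through $b$ have the form $(a,y,w')$. Intersecting these two kinds forces $w=w'$.

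Hence $\F$ is contained in
\[
\{(a,b,z)\}\cup\{(x,b,w)\}\cup\{(a,y,w)\},
\]
which has size at most $n_3+(n_1-1)+(n_2-1)$. This is exactly the bound.

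\textbf{Main obstacle.} The remaining case is when no two edges agree in two coordinates. Then any two distinct edges meet in exactly one coordinate. I would bound $|\F|$ directly in this case. Fix an edge $A=(a,b,c)$. Every other edge meets $A$ in exactly one of its three vertices. The edges through $a$ pairwise share only $a$, so they have distinct second coordinates and distinct third coordinates. Hence each vertex has degree at most $\min$ of the other two part sizes, which is at most $n_2$.

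This gives only a weak bound, so I would refine it using the structure. Edges through $a$, $b$, and $c$ (other than $A$) must pairwise intersect. An edge $(a,y,z)$ and an edge $(x,b,w)$ with $x\ne a$, $y\ne b$ must have $z=w$. If there are at least two edges through $a$ besides $A$, they have distinct third coordinates, so no edge through $b$ other than $A$ can avoid $a$ unless its third coordinate matches both, which is impossible. Continuing this case analysis on which of $a,b,c$ carry extra edges should show that $|\F|$ is small, at most about $\max(n_2,3)+1$.

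That is at most $n_1+n_2+n_3-2$ because $n_3\ge 2$, with only a tiny check needed when all $n_i=2$. I expect this linear-space-like case analysis to be the fiddly step.
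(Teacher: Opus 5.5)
Your argument is correct, but it is organized differently from the paper's.

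\textbf{The paper's route.} The paper splits on whether $\F$ contains a copy of $K_3(2)$, i.e.\ three edges each agreeing with a common triple $(v_1,v_2,v_3)$ in two coordinates. If it does, every edge agrees with $(v_1,v_2,v_3)$ in at least two coordinates, unless $\F$ is the $4$-edge configuration obtained by adding $(w_1,w_2,w_3)$. If $\F$ is $K_3(2)$-free, the paper argues by counting. If $|\F|\ge n_1+n_2+n_3-1$, some edge $F_2$ meets a fixed edge $F_1$ in a single vertex $u_1$. Any edge avoiding $u_1$ would close a $K_3(2)$ with $F_1,F_2$, so $u_1$ would be a transversal.

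\textbf{Your route.} You split instead on whether some two edges agree in two coordinates. Your Case A starts from a weaker seed, one such pair rather than a triangle. It uses $\tau(\F)\ge 2$ twice to push $\F$ inside the set of triples agreeing with $(a,b,w)$ in at least two coordinates, with no counting at all. This gives the structure of $\F$ directly and essentially identifies the extremal families. The price is a separate linear case, which the paper's counting step absorbs automatically.

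\textbf{Finishing the linear case.} That case is easier than you anticipate, and you have already written its key step. Two further edges through $a$ have distinct third coordinates, which excludes further edges through $b$. They also have distinct second coordinates, which by the same reasoning excludes further edges through $c$. Then every edge contains $a$, contradicting $\tau(\F)\ge 2$. Hence each of $a,b,c$ lies in at most one edge other than $A$, so $|\F|\le 4\le n_1+n_2+n_3-2$. Neither the estimate ``about $\max(n_2,3)+1$'' nor a special check for $n_1=n_2=n_3=2$ is needed. Since you only say the case analysis ``should show'' this, write out these two lines to make the proof complete.
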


Note that $m_0(1,n_1,n_2,n_3,n_4)$ has no ``universal'' answer, even when $n_1\le n_4+1$ and $n_4$ is sufficiently large. See Theorem~\ref{thm:matching_n_large'}; the value of $m_0(1,n_1,n_2,n_3,n_4)$ takes one of the two forms depending on whether $(n_1-1)(n_4-1)\le (n_2-1)(n_3-1)$ or not. For $r\ge 5$, the situation is even more complex and it would be interesting to investigate.

A family $\mathcal{F} \subseteq \binom{[n]}{r}$ is called \emph{$t$-intersecting} if any two sets in $\mathcal{F}$ intersect in at least $t$ elements. 
In their seminal work, Erd\H{o}s, Ko, and Rado~\cite{ErdosKoRado1961} proved that $|\mathcal{F}| \le \binom{n-t}{r-t}$ whenever $n$ is sufficiently large. 
This maximum is achieved by the family consisting of all $r$-subsets containing a fixed set of size $t$. 
Frankl~\cite{frankl1978erdos} conjectured that the maximum is always achieved by the family of all $r$-subsets intersecting some fixed set of size $t+2i$ in at least $t+i$ elements, for some integer $0 \le i \le r-t$. 
This conjecture was completely settled by Ahlswede and Khachatrian~\cite{ahlswede1997complete}.

Let $X_1, \dots, X_r$ be sets, and consider sequences $A=(a_1, \dots, a_r)$ and $B=(b_1, \dots, b_r)$ in $X_1 \times \dots \times X_r$. 
We define the intersection of two such sequences as the set of indices where they agree: $A \cap B := \{ \ell \in [r] : a_\ell = b_\ell \}$. 
A family $\mathcal{F} \subseteq X_1 \times \dots \times X_r$ is called \emph{$t$-intersecting} if $|A \cap B| \ge t$ for all $A, B \in \mathcal{F}$. The $t$-intersection problem for $r$-partite $r$-graphs can be stated as follows. 

\begin{defn}
For integers $r>t \ge 1$ and $n_1 \ge \dots \ge n_r\ge 2$, let $\iota(t, n_1, \dots, n_r)$ denote the maximum size of a $t$-intersecting family $\mathcal{F} \subseteq X_1 \times \dots \times X_r$. 
In the symmetric case where $n_1 = \dots = n_r = n$, we write $\iota(t, n; r) = \iota(t, \underbrace{n, \dots, n}_{r})$.
\end{defn}

For a family $\mathcal{F} \subseteq X_1 \times \dots \times X_r$, we define the set of fixed coordinates as
\[
\bigcap\mathcal{F} := \{ \ell \in [r] : \text{all sequences in } \mathcal{F} \text{ have the same entry in the } \ell\text{-th coordinate} \}.
\]
A $t$-intersecting family $\mathcal{F}$ is called \emph{non-trivial} if $|\bigcap\mathcal{F}| < t$. 
Note that non-triviality has different meanings in the contexts of matching problems and intersection problems, and the reader should distinguish between them accordingly.

Frankl and Füredi~\cite{frankl1980erdos} proved that $\iota(t, n; r) = n^{r-t}$ for $t \ge 15$ and sufficiently large $n$. 
They conjectured that the same equality holds for all $t \ge 1$ and $n \ge t+1$. 
This conjecture was completely resolved by Frankl and Tokushige~\cite{frankl1999erdos}. Note that the maximum is indeed achieved by a trivial family. It is natural to ask for the maximum when the trivial family is not allowed.

\begin{defn}
For integer $r>t\ge 1$ and $n_1 \ge \dots \ge n_r\ge2$, let $\iota_0(t, n_1, \dots, n_r)$ denote the maximum size of a $t$-intersecting family $\mathcal{F} \subseteq X_1 \times \dots \times X_r$ with $|\bigcap\F|<t$. 
In the symmetric case where $n_1 = \dots = n_r = n$, we write $\iota_0(t, n; r) = \iota_0(t, \underbrace{n, \dots, n}_r)$.
\end{defn}
Note that $m_0(1,n_1,\dots,n_r)=\iota_0(1,n_1,\dots,n_r)$. We prove the following generalization of Theorem \ref{thm:3-partite intersecting}.

\begin{thm}[Proof is in Section~\ref{sec:r-2}]\label{thm:(r-2)-intersecting}
For integers $r\ge 3$ and $n_1\ge \dots\ge n_r\ge 2$, 
$$
\iota_0(r-2,n_1,\dots,n_r)=n_1+\dots+n_r-r+1.
$$
\end{thm}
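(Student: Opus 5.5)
The lower bound comes from a Hamming ball of radius $1$. The upper bound will come from a structural dichotomy for families in which any two members differ in at most two coordinates.

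\textbf{Lower bound.} Let $\F$ be the set of all vectors differing from $(1,\dots,1)$ in at most one coordinate. Then $|\F|=1+\sum_{\ell}(n_\ell-1)=n_1+\dots+n_r-r+1$. Two members of $\F$ differ in at most two coordinates, so $\F$ is $(r-2)$-intersecting. Every coordinate $\ell$ takes at least two values in $\F$ because $n_\ell\ge 2$, so $\bigcap\F=\emptyset$ and $\F$ is non-trivial.

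\textbf{Upper bound.} An $(r-2)$-intersecting family is exactly a family of Hamming diameter at most $2$. Non-triviality says that at least three coordinates are non-fixed. I would first delete the fixed coordinates, which changes neither the family size nor the diameter. So we may assume $\bigcap\F=\emptyset$, with $r\ge 3$ and the right-hand side only getting smaller.

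If all pairwise distances in $\F$ are at most $1$, then all members differ in one common coordinate. Hence $r-1\ge 2$ coordinates are fixed, a contradiction. So there are $A,B\in\F$ at distance $2$; normalise $A=(1,1,1,\dots,1)$ and $B=(2,2,1,\dots,1)$. Since coordinate $3$ is not fixed, some $C\in\F$ has $C[3]\ne 1$. The constraints $d(C,A)\le2$ and $d(C,B)\le 2$ leave only a few shapes for $C$ (up to relabelling):
\[
(1,1,c,1,\dots),\quad (2,1,c,1,\dots),\quad (1,2,c,1,\dots),\quad (2,2,c,1,\dots).
\]
I would then prove the key lemma: \emph{a diameter-$2$ family with no fixed coordinate is either contained in a Hamming ball of radius $1$, or has exactly three coordinates and is (up to renaming values) the $4$-element family $\{(1,1,1),(2,2,1),(2,1,2),(1,2,2)\}$.}

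The proof of the lemma fixes a candidate centre from the configuration $A,B,C$ — for example $(2,1,1,\dots,1)$ or $(1,1,1,\dots,1)$ — and shows that any $D\in\F$ at distance $\ge2$ from that centre forces, via $d(D,A),d(D,B),d(D,C)\le2$, either a contradiction or the exceptional "even-weight" triangle structure. In the exceptional case, a fourth coordinate could not vary without creating distance $3$. Once the lemma is in hand we are done. In the ball case, a radius-$1$ ball has exactly $1+\sum_\ell(n_\ell-1)$ points. In the exceptional case, $4\le n_1+n_2+n_3-2$ since every $n_\ell\ge2$. Re-inserting the deleted fixed coordinates keeps both bounds below $n_1+\dots+n_r-r+1$, because each fixed coordinate contributes $n_\ell-1\ge1$ to the right-hand side.

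\textbf{Main obstacle.} The hard part is the case analysis in the key lemma, namely showing that no "mixed" configuration escapes both alternatives. I would organise it by the set $S(D)$ of coordinates where $D$ differs from $A$, which has $|S(D)|\le2$. I would then consider the graph on $[r]$ whose edges are the sets $S(D)$ of size $2$. Pairwise distance $\le 2$ forces any two such edges to intersect, or else to share values appropriately. Intersecting edges form either a star, which gives the ball centred at a modification of $A$ at the star's centre, or a triangle, which gives the exceptional $3$-coordinate family. Taking care with the values (not just the supports) in the star case is where I expect most of the work.
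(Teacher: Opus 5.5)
Your plan is correct and reaches the same structural picture as the paper. Once the fixed coordinates are discarded, $\F$ lies in a Hamming ball of radius $1$, except for the four-member family when exactly three coordinates remain.

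Your first sketch, taking the centre $(2,1,1,\dots,1)$ from $A,B,C$, is essentially the paper's route. The paper inducts on $r$, removing one fixed coordinate at a time. For $r=3$ it splits on whether $\F$ contains a $K_3(2)$. If it does, either the antipode of the centre is present and $|\F|=4$, or every member agrees with the centre in two coordinates. If it does not, a member meeting a given one in a single coordinate forces all members to share that coordinate. For $r\ge 4$ with no fixed coordinate, it builds an explicit $K^{r-1}_r[2]$ around a centre $v$, consisting of members $F_i$ with $F_i[i]=x_i\ne v_i$. A member at distance $2$ from $v$ is then forced by $F_3$ to have third entry $x_3$ and by $F_4$ to have third entry $v_3$, a contradiction.

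The support-graph organisation you actually commit to is genuinely different. You read the centre off the star/triangle structure of the $2$-element difference sets around one member $A$. This treats all $r$ uniformly, avoids induction, and makes the exceptional family appear exactly as the triangle case. The cost is value bookkeeping that the paper's explicit configuration avoids, since there every entry of the centre is pinned by construction.

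A few things to correct when you write it out:
\begin{itemize}
\item Only $(2,1,c,1,\dots,1)$ and $(1,2,c,1,\dots,1)$ are possible for $C$. The shape $(1,1,c,1,\dots)$ is at distance $3$ from $B$, and $(2,2,c,1,\dots)$ is at distance $3$ from $A$.
\item Disjoint supports are simply impossible, since they give distance $4$. The value constraint is that members with supports $\{p,i\}$ and $\{p,j\}$, $i\ne j$, must agree at $p$. Members with the same support constrain each other not at all.
\item If there are no $2$-element supports, $\F$ is already in the ball about $A$.
\item A star with at least two distinct edges forces a common value $\beta$ at its centre $p$. A member at distance $1$ from $A$ must then differ from $A$ exactly at $p$; otherwise it is at distance $3$ from a member with a different leaf. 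Hence $\F$ lies in the ball about $A$ with its $p$-entry replaced by $\beta$.
\item A star with a single edge $\{i,j\}$ does not yield a ball. In that case every member differs from $A$ only inside $\{i,j\}$, so the other $r-2\ge 1$ coordinates are fixed, contradicting your reduction. This is where the no-fixed-coordinate hypothesis does real work.
\item In the triangle case, each edge carries exactly one member and no member is at distance $1$ from $A$. The coordinates outside the triangle are then fixed, forcing $r=3$ and the four-member family.
\end{itemize}
With these points your lemma holds, and the counting you give finishes the proof.
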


In the case when all parts have the same size $n$, and when $n$ is sufficiently large, we determine $\iota_0(t,n;r)$, showing that its value exhibits a phase transition at the threshold $t=\frac{r}{2}-1$.
\begin{thm}[Proof is in Section~\ref{sec:intersection}]\label{thm:t-intersecting}
For integers $r\ge 3$ and $r-2\ge t\ge 1$, there exists an integer $n(r,t)$ such that if $n\ge n(r,t)$, then
$$
\iota_0(t,n;r)=\max\{n^{r-t}-(n-1)^{r-t}+t(n-1),(t+2)n^{r-t-1}-(t+1)n^{r-t-2}\}.
$$
Note that, given $n$ is sufficiently large, $\iota_0(t,n;r)$ takes the first value in the max above when $t\le \frac{r}{2}-1$, and takes the second otherwise.
\end{thm}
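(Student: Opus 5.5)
We first show that both values are attained. Constructions via Definition~\ref{defn:extension}: fix coordinates $1,\dots,t$ and let $\cH_1=\{[t]\cup\{\ell\}:\ell\in\{t+1,\dots,r\}\}\cup\{\{t+1,\dots,r\}\}$ style families. Concretely, for the first value take all $F$ with $F[1]=\dots=F[t]=1$ whose restriction to coordinates $t+1,\dots,r$ meets the Hilton--Milner type family $W_{r-t}$ on those coordinates (all $F[t+1..r]$ with some coordinate equal to $1$ among a suitable pattern), plus the $t(n-1)$ vectors $(1,\dots,1,x,1,\dots,1)$ that vary one of the first $t$ coordinates. We check $t$-intersection directly; its size is $n^{r-t}-(n-1)^{r-t}+t(n-1)$. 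For the second value take the analogue of the Frankl $(t+2,t+1)$ family: $\cH_2=\binom{[t+2]}{t+1}$, giving $\cH_2(n)$ with $(t+2)(n-1)n^{r-t-2}+n^{r-t-2}=(t+2)n^{r-t-1}-(t+1)n^{r-t-2}$ members. Both are non-trivial and $t$-intersecting. Comparing the leading terms $(r-t)n^{r-t-1}$ versus $(t+2)n^{r-t-1}$ gives the threshold $t=\frac r2-1$, with lower-order terms settling the tie case $r-t=t+2$.

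For the upper bound, let $\F$ be non-trivial $t$-intersecting of maximum size. Using the standard shifting (compression towards $1$ in each coordinate), we reduce to a shifted family, handling the possibility that shifting creates a trivial family the usual way. Namely, we shift only while non-triviality is preserved, and otherwise we obtain the extra structure that $\F$ is ``almost trivial''. For shifted families, membership depends only on $\P(F)$, so $\F=\cH(n)$ for $\cH=\P(\F)$, an up-closed family in $2^{[r]}$. Then $t$-intersection of $\cH(n)$ forces $\cH$ to be $t$-intersecting as a set family, because vectors whose non-$1$ coordinates are chosen distinct intersect exactly in $\P(F)\cap\P(G)$. The size of $\cH(n)$ is $\sum_{E\in\cH}(n-1)^{r-|E|}$, dominated by minimal sets of smallest size $k$. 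Hence $|\F|=c\,n^{r-k}+O(n^{r-k-1})$, where $c$ counts $k$-sets in $\cH$, and the problem becomes an optimization over $t$-intersecting up-sets $\cH\subseteq 2^{[r]}$ with $|\bigcap\cH|<t$.

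We finish by case analysis on the minimal sets. If $k=t$, the only $t$-set $T$ must be contained in all members, which forces triviality; so $k\ge t+1$. If $k\ge t+2$, then $|\F|=O(n^{r-t-2})$ is too small. If $k=t+1$, the $(t+1)$-sets of $\cH$ form a $t$-intersecting family. Such a family is either a sunflower $\{T\cup\{x\}\}$ with common $t$-core $T$, or is contained in $\binom{S}{t+1}$ with $|S|=t+2$. In the second case $\cH\subseteq$ the Frankl family, giving at most the second value. In the sunflower case with $m$ petals, every other member must meet $T$ in enough places, and non-triviality requires some member missing a point of $T$; such a member must contain all petals. We bound the count by $m n^{r-t-1}$-type terms plus corrections, and maximise at $m=r-t$, recovering the first construction, whose exact lower-order terms we match.

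The main obstacle is the shifting step. Non-triviality may be destroyed by shifting, and for non-shifted families the reduction to $\cH(n)$ is lossy. I would first try the Frankl-style argument: if a shift makes the family trivial, then $\F$ has a $t$-set meeting all but a structured subfamily, and we bound $|\F|$ directly, possibly using a sunflower/kernel argument for large $n$ (as in Section~\ref{sec:sunflower}). That kernel argument would show that extremal $\F$ is determined by a bounded-size structure on coordinates.
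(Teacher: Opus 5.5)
\textbf{Gap: the shift-resistant case is never bounded.} Your treatment of coordinate-wise shifted families follows the paper's route. You pass to $\cH=\P(\F)$, note that nothing of size $\le t$ survives non-triviality, and read off the leading term $\rho_{t+1}n^{r-t-1}$. You then split the $t$-intersecting $(t+1)$-sets into a $t$-core sunflower or a subfamily of $\binom{S}{t+1}$ with $|S|=t+2$, and count exactly in the two extremal configurations. Your observation that a member missing a point of $T$ must contain all petals is exactly what gives $[t]\subseteq E$ for $|E|\le r-2$ in the sunflower case. The problem is the other branch. You stop shifting when a shift would make $\F$ trivial, but you never bound $|\F|$ for the family you end up with. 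Calling $\F$ ``almost trivial'' and suggesting a sunflower or kernel argument is not a proof. As written, the upper bound is established only for coordinate-wise shifted families.

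\textbf{How to close it.} The missing step is short and elementary; no kernel argument is needed.
\begin{itemize}
\item Suppose $S^{(\ell)}_x(\F)$ is trivial while $\F$ is not. Only coordinate $\ell$ changes, so $\bigcap S^{(\ell)}_x(\F)=(\bigcap\F)\cup\{\ell\}$. Hence $|\bigcap\F|=t-1$ and every $F\in\F$ has $F[\ell]\in\{1,x\}$.
\item Every member with $F[\ell]=x$ must actually be moved by the shift. Equivalently, no two members of $\F$ differ only in coordinate $\ell$ (Lemma~\ref{lem:shift_resistant}).
\item Suppose $\ell$ were the only shift-resistant coordinate. Shiftedness in all other coordinates would put both $(1,\dots,1)$ and the vector with $x$ in position $\ell$ and $1$ elsewhere into $\F$. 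These differ only in coordinate $\ell$, a contradiction. So there are $b\ge 2$ resistant coordinates.
\item Now count. One resistant coordinate is determined by all the others. Each remaining resistant coordinate takes two values, $t-1$ coordinates are fixed, and the other $r-b-t+1$ coordinates take at most $n$ values each. This gives $|\F|\le 2^{b-1}n^{r-b-t+1}\le 2n^{r-t-1}$, well below $(t+2)n^{r-t-1}-(t+1)n^{r-t-2}$ (Lemma~\ref{lem:not_shifted}).
\end{itemize}
Insert this before passing to shifted families.

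\textbf{Smaller points.}
\begin{itemize}
\item Your set family $\cH_1$ containing $\{t+1,\dots,r\}$ is not $t$-intersecting for $t\ge 2$, and $W_{r-t}$ plays no role. The right family is $W_{r,t}(n)$: all $F$ with $F[1]=\dots=F[t]=1$ and $F[\ell]=1$ for some $\ell>t$, plus the $t(n-1)$ vectors differing from $(1,\dots,1)$ in exactly one of the first $t$ coordinates. This is what your size count actually uses.
\item A shifted family is not automatically of the form $\cH(n)$. You need maximality: adding every vector whose projection lies in $\P(\F)$ preserves $t$-intersection and non-triviality.
\item If you settle the tie $r-t=t+2$ by lower-order terms, you will find the second value is strictly larger for $t\ge 2$. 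For $r=6$, $t=2$ the difference is $3(n-1)^2$. The two values coincide only for $t=1$, $r=4$.
\end{itemize}
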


Define 
$$
W_{r,t}:=\{[t]\cup\{\ell\}:t+1\le \ell\le r \}\cup\{[r]\setminus\{\ell\}:1\le \ell\le t\},
$$
and $K_{r,t}:=\binom{[t+2]}{t+1}\in 2^{[r]}$. Note that $K_{r,t}$ should be viewed as a complete $(t+1)$-graph on $t+2$ vertices union with $r-t-2$ isolated vertices. The first value in the max in Theorem~\ref{thm:t-intersecting} is achieved by $W_{r,t}(n)$ while the second is achieved by $K_{r,t}(n)$. 

In an earlier version of this paper, we conjectured that the formula in Theorem~\ref{thm:t-intersecting} holds for every $n\ge 2$. Shortly thereafter, Václav Rozhoň, Robert Samal, and Adrián Zámečník informed us that they had found counterexamples using Bolzano~\cite{bolzano2026casestudies}, an AI-assisted research tool. Subsequently, Hou and Hu~\cite{hou2026non} determined $\iota_0(t,n;r)$ for all $n\ge 2$.

\section{Shifting coordinate-wise}\label{sec:shifting}
The shifting technique is a powerful tool in extremal set theory,  usually used to simplify a family of sets while preserving key properties. It is first used by Erd\H{o}s, Ko, and Rado~\cite{ErdosKoRado1961} to prove the seminal Erd\H{o}s-Ko-Rado
Theorem. See~\cite{frankl1987shifting} for a survey on this technique. In this section we use the shifting technique to prove Theorem \ref{thm:s=1}.

Let us recall that $\F\subset X_1\times\dots\times X_r$ is called $t$-intersecting if for all $(a_1,\dots,a_r), (b_1,\dots,b_r)\in\F$, $a_\ell=b_\ell$ holds for at least $t$ distinct values of $1\le \ell\le r$. To avoid trivialities we tacitly assume $r>t\ge 1$. Let $1\le \ell\le r$ be fixed and choose $1<j\le n_\ell$. Let us define the $(1\leftarrow j)$-shift $S^{(\ell)}_j$ by $S^{(\ell)}_j(\F)=\{S_j^{(\ell)}(F):F\in\F\}$ where
$$
S^{(\ell)}_j((a_1,\dots,a_\ell,\dots,a_r))=
\left\{
\begin{aligned}
&F'=(a_1,\dots,1,\dots,a_r)&\text{~if~}a_{\ell}=j\text{~and~}F'\not\in\F,\\
&(a_1,\dots,a_r)&\text{~otherwise}.
\end{aligned}
\right.
$$
In human language, if $(a_1,\dots,a_r)\in\F$ and $a_{\ell}=j$ then we try and replace the $\ell^{\text{th}}$ coordinate by $1$ but refrain from doing so if the altered sequence is already an edge. This ensures $|S^{(\ell)}_j(\F)|=|\F|$. In~\cite{frankl1980erdos} it was shown that the $(1\leftarrow j)$-shift maintains the $t$-intersecting property. We say that $\F$ is \textit{$\ell$-shifted} if $S^{(\ell)}_j(\F)=\F$ for all $j$. Further, call $\F$ \textit{coordinate-wise shifted} if it is $\ell$-shifted for every $1\le \ell\le r$.

\begin{lem}\label{lem:projection_maintain_intersection}
Let integers $r\ge 3$ and $r-2\ge t\ge 1$. If $\F\subset X_1\times \dots \times X_r$ is coordinate-wise shifted and non-trivial $t$-intersecting, then the family of projections $\mathcal{P}(\mathcal{F})$ is non-trivial $t$-intersecting as well. 
\end{lem}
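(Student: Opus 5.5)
The plan is to show two things separately: that $\mathcal{P}(\F)$ is $t$-intersecting as a family of subsets of $[r]$, and that it is non-trivial. Here non-trivial means $\bigl|\bigcap_{F\in\F}\mathcal{P}(F)\bigr|<t$. The only input from shiftedness is a downward-closure property, which I record first.

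\emph{Closure under lowering coordinates to $1$.} Let $F\in\F$ with $F[\ell]=j\neq 1$. Let $F'$ be $F$ with its $\ell$-th coordinate replaced by $1$. Since $\F$ is $\ell$-shifted, $S^{(\ell)}_j(\F)=\F$. If $F'\notin\F$, the shift would move $F$ to $F'$, giving $F'\in S^{(\ell)}_j(\F)=\F$, a contradiction. Hence $F'\in\F$. Iterating over coordinates shows the following: for every $F\in\F$ and every set $D\subseteq[r]$, the vector obtained from $F$ by setting all coordinates in $D$ to $1$ lies in $\F$.

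\emph{Projections are $t$-intersecting.} Take $F,G\in\F$ and let
$$D=\{\ell\in[r]: F[\ell]=G[\ell]\neq 1\}.$$
Let $F^*$ be $F$ with all coordinates in $D$ set to $1$; by the closure property, $F^*\in\F$. Compare $F^*$ with $G$ coordinate by coordinate:
\begin{itemize}
\item for $\ell\in D$ we have $F^*[\ell]=1\neq G[\ell]$, so $F^*$ and $G$ disagree;
\item for $\ell\notin D$ we have $F^*[\ell]=F[\ell]$, so they agree exactly when $F[\ell]=G[\ell]=1$.
\end{itemize}
Therefore $F^*\cap G=\mathcal{P}(F)\cap\mathcal{P}(G)$. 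Since $\F$ is $t$-intersecting, $|\mathcal{P}(F)\cap\mathcal{P}(G)|=|F^*\cap G|\ge t$.

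\emph{Non-triviality.} Suppose $T=\bigcap_{F\in\F}\mathcal{P}(F)$ had $|T|\ge t$. Then every member of $\F$ has entry $1$ in each coordinate of $T$, so $T\subseteq\bigcap\F$. This gives $|\bigcap\F|\ge t$, contradicting the non-triviality of $\F$.

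The only step needing care is choosing the right single shifted-down member $F^*$: shifting away exactly the common non-$1$ agreements makes $F^*\cap G$ equal to $\mathcal{P}(F)\cap\mathcal{P}(G)$. With that choice everything else is immediate. The hypothesis $t\le r-2$ is not needed for this argument; it only ensures the setting is meaningful.
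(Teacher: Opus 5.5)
Your proposal is correct and follows the paper's proof. In both, one member is shifted down so that its intersection with the other member equals $\mathcal{P}(F)\cap\mathcal{P}(G)$, and non-triviality follows because a coordinate lying in every projection is fixed in $\mathcal{F}$. The remaining differences are cosmetic. The paper sets to $1$ every coordinate outside $\mathcal{P}(F)\cup\mathcal{P}(G)$, whereas you set only the common non-$1$ agreements; both choices work. You also derive explicitly the downward closure under shifting, which the paper takes for granted.
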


\begin{proof}
We first prove the $t$-intersecting property. Suppose for contradiction that there exist $A,B\in\P(\F)$ such that $|A\cap B|<t$. By definition, there exist $F_A,F_B\in\F$ with $\P(F_A)=A$ and $\P(F_B)=B$. Let $F'_A$ be the sequence obtained by replacing the $\ell^{\text{th}}$ coordinate of $F_A$ by 1 for every $\ell \in [r]\setminus(A\cup B)$. Because $\F$ is coordinate-wise shifted we have $F'_A\in\F$. It is not hard to check that $F'_A\cap F_B=A\cap B$. Thus $|F'_A\cap F_B|=|A\cap B|<t$, which contradicts $\F$ being $t$-intersecting. Thus $\P(\F)$ is $t$-intersecting.

Next, suppose for contradiction that $|\bigcap\P(\F)|=t$. Note that if $\ell\in \bigcap\P(\F)$, then for every $F\in\F$, we have $\ell\in\P(F)$  i.e. $F[\ell]=1$, which implies $\ell\in\bigcap \F$. Thus, $|\bigcap\F|\ge |\bigcap\P(\F)|=t$, contradicting to $\F$ being non-trivial. Therefore, $\P(\F)$ is non-trivial $t$-intersecting.
\end{proof}

There is one major problem with shifting, namely it might decrease the covering number $\tau$, in particular it might turn a non-trivial family into a trivial one. We say a non-trivial $t$-intersecting family $\F\subset X_1\times \dots \times X_r$ is \textit{$\ell$-shift-resistant} if there exists $x\in X_\ell$ such that $S_x^{(\ell)}(\F)$ is trivial, i.e. $|\bigcap S_x^{(\ell)}(\F)|=t$. For $\F\subset X_1\times\dots\times X_r$ and $x\in X_\ell$, define 
$$N_{\F}(x,\ell):=\{A\in\F: A[\ell]=x \}.$$
\begin{lem}\label{lem:shift_resistant}
Let integers $r\ge 3$ and $r-2\ge t\ge 1$. Let $\F\subset X_1\times\dots\times X_r$ be non-trivial $t$-intersecting and $\ell$-shift-resistant for some $\ell\in[r]$. Then $|\bigcap F|=t-1$ and there exists $x\in X_\ell$ such that $N_{\F}(x,\ell)\not=\emptyset$, $N_{\F}(1,\ell)\not=\emptyset$, $S_x^{(\ell)}(N_{\F}(x,\ell))\cap N_{\F}(x,\ell)=\emptyset$, and $N_{\F}(y,\ell)=\emptyset$ for every $y\in X_\ell\setminus\{x,1\}$.
\end{lem}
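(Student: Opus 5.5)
Fix $x\in X_\ell$ with $|\bigcap S_x^{(\ell)}(\F)|=t$, and write $\F'=S_x^{(\ell)}(\F)$. The plan is to compare $\bigcap\F'$ with $\bigcap\F$. Since $\F$ and $\F'$ agree outside coordinate $\ell$, except that some edges have their $\ell$-th entry changed from $x$ to $1$, I will show that every coordinate fixed in $\F'$ other than $\ell$ is already fixed in $\F$.

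\textbf{Coordinates $k\ne\ell$.} Every $F'\in\F'$ has the same entries as its preimage $F\in\F$ at every coordinate $k\neq\ell$. Hence for $k\neq\ell$ we have $k\in\bigcap\F'$ if and only if $k\in\bigcap\F$. Consequently $\bigcap\F'\subseteq\bigcap\F\cup\{\ell\}$. Since $|\bigcap\F|\le t-1$ by non-triviality, the equality $|\bigcap\F'|=t$ forces $\ell\in\bigcap\F'$ and $\ell\notin\bigcap\F$. It also forces $|\bigcap\F\setminus\{\ell\}|=t-1$, so $|\bigcap\F|=t-1$ (and indeed $\bigcap\F'=\bigcap\F\cup\{\ell\}$).

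\textbf{Coordinate $\ell$.} Because $\ell\in\bigcap\F'$, all edges of $\F'$ share a common $\ell$-th entry $c$. The shift only changes entries equal to $x$, turning them into $1$, so every $F\in\F$ has $F[\ell]\in\{c,x\}$ and $F[\ell]=c$ unless $F[\ell]=x$. If $c\notin\{1,x\}$, then no edge of $\F'$ has $\ell$-th entry $1$ or $x$. So no edge of $\F$ had entry $x$, and $\F=\F'$ would be trivial, a contradiction.

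If $c=x$, then no edge was moved, which again gives $\F=\F'$ trivial. Hence $c=1$, and every $F\in\F$ has $F[\ell]\in\{1,x\}$. Both values must occur: otherwise $\ell\in\bigcap\F$, contradicting $\ell\notin\bigcap\F$. This also shows $x\ne 1$. So $N_\F(1,\ell)\neq\emptyset$ and $N_\F(x,\ell)\neq\emptyset$, while $N_\F(y,\ell)=\emptyset$ for every $y\notin\{1,x\}$.

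\textbf{All $x$-edges move.} Finally, every edge of $N_\F(x,\ell)$ must be moved to $1$, since otherwise it would keep $\ell$-th entry $x$ in $\F'$, contradicting $c=1$. By the definition of the shift, an edge $A$ is moved exactly when its image $A'$ is not in $\F$. So every $S_x^{(\ell)}(A)$ with $A\in N_\F(x,\ell)$ has $\ell$-th entry $1$ and lies outside $\F$. In particular each image has $\ell$-th entry $1\neq x$, which gives $S_x^{(\ell)}(N_\F(x,\ell))\cap N_\F(x,\ell)=\emptyset$ (and in fact the images avoid $\F$ entirely).

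The only delicate step is justifying that the common value $c$ must be $1$. The cases $c=x$ and $c\notin\{1,x\}$ are handled above by noting that each leads to $\F'=\F$, which contradicts non-triviality. Everything else is bookkeeping.
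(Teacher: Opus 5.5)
Your proof is correct and follows the same route as the paper: $\bigcap\F$ and $\bigcap S_x^{(\ell)}(\F)$ can differ only in the coordinate $\ell$, which gives $|\bigcap\F|=t-1$ and shows the common $\ell$-th entry after shifting must be $1$; the remaining conclusions then follow. Your case analysis on the common value $c$ supplies the step the paper just calls ``clear,'' and your observation that the moved images avoid $\F$ entirely is exactly the form of the last condition that Lemma~\ref{lem:not_shifted} later uses.
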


\begin{proof}
 Since only the $\ell$-th coordinate is affected by $S_x^{(\ell)}$, the intersections $\bigcap\F$ and $\bigcap S_x^{(\ell)}(\F)$ can differ only by one element $\ell$. Thus $\F$ being $\ell$-shift-resistant, i.e. $|\bigcap\F|<|\bigcap S_x^{(\ell)}(\F)|=t$ for some $x\in X_\ell$, implies that $\ell\not\in\bigcap\F$, $\bigcap S_x^{(\ell)}(\F)=(\bigcap\F)\cup\{\ell\}$, and hence $|\bigcap\F|=t-1$. It is clear from the fact $\ell\in \bigcap S_x^{(\ell)}(\F)\setminus \bigcap\F$ that all sequences in $S_x^{(\ell)}(\F)$ has entry $1$ at their $\ell$-th coordinate, and all sequences in $\F$ has entry $1$ or $x$ at their $\ell$-th coordinate, which implies that $N_{\F}(y,\ell)=\emptyset$ for every $y\in X_\ell\setminus\{x,1\}$. 
 
 Moreover, neither $N_{\F}(1,\ell)$ nor $N_{\F}(x,\ell)$ can be empty; otherwise all members of $\F$ would share the same value at the $\ell$-th coordinate, contradicting $\ell\not\in \bigcap\F$. 
 
 Finally, suppose $S_x^{(\ell)}(N_{\F}(x,\ell))\cap N_{\F}(x,\ell)\not=\emptyset$, then some $A\in N_{\F}(x,\ell)$ remains unchanged after the $(x\rightarrow1)$-shift, contradicting $\ell\in \bigcap S_x^{(\ell)}(\F)$. 
 
 This completes the proof.
\end{proof}

\begin{lem}\label{lem:not_shifted}
Let integers $r\ge 3$ and $r-2\ge t\ge 1$. Let $|X_\ell|=n\ge 2$ for every $1\le \ell \le r$. Let $\F\subset X_1\times\dots\times X_r$ be non-trivial $t$-intersecting such that $\F$ is either $\ell$-shifted or $\ell$-shift-resistant for every $1\le \ell\le r$, and $\F$ is not coordinate-wise shifted. Then
$$
|\F|\le (t+2)n^{r-t-1}-(t+1)n^{r-t-2}.
$$
\end{lem}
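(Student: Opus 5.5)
The plan is to take a coordinate where $\F$ fails to be shifted and use Lemma~\ref{lem:shift_resistant} to pin down the structure of $\F$ there, then reduce to a cross-intersecting problem on fewer coordinates. Since $\F$ is not coordinate-wise shifted, there is some $\ell$ with $\F$ not $\ell$-shifted. By hypothesis $\F$ is then $\ell$-shift-resistant, and Lemma~\ref{lem:shift_resistant} gives:
\begin{itemize}
\item[(i)] $C:=\bigcap\F$ has $|C|=t-1$;
\item[(ii)] the $\ell$-th coordinate of every member of $\F$ is $1$ or $x$;
\item[(iii)] writing $\F_1=N_\F(1,\ell)$ and $\F_x=N_\F(x,\ell)$, both are nonempty, and no member of $\F_x$ becomes a member of $\F_1$ when its $\ell$-th entry is changed to $1$.
\end{itemize}
Delete the coordinates in $C\cup\{\ell\}$, a set $D$ of size $m:=r-t\ge 2$. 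Let $\mathcal G,\mathcal H\subseteq\prod_{j\in D}X_j$ be the restrictions of $\F_1,\F_x$. Every member of $\F_1$ and every member of $\F_x$ already agree on the $t-1$ coordinates of $C$ and disagree at $\ell$, so $\mathcal G$ and $\mathcal H$ are cross-intersecting. By (iii) they are also disjoint, and $|\F|=|\mathcal G|+|\mathcal H|$. Note that inside $\mathcal G$ (or inside $\mathcal H$) there is no constraint, since $C\cup\{\ell\}$ already supplies $t$ agreements.

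Next I use the hypothesis on the other coordinates. Let $R\subseteq D$ be the set of coordinates $j\in D$ at which $\F$ is not $j$-shifted, hence $j$-shift-resistant. By Lemma~\ref{lem:shift_resistant} applied at $j$, every coordinate in $R$ takes only two values on $\F$ (namely $1$ and some $x_j$). For $j\in D\setminus R$, $\F$ is $j$-shifted, so $\F_1$ and $\F_x$ are each closed under replacing the $j$-th entry by $1$; thus $\mathcal G$ and $\mathcal H$ are closed under this operation for all $j\in D\setminus R$.

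The key claim is that $R\neq\emptyset$. If $R=\emptyset$, then iterating these shifts sends any member of $\mathcal G$ and any member of $\mathcal H$ to the all-ones vector, so that vector lies in both families. This contradicts the disjointness from (iii). More precisely, for each member $G\in\mathcal G$ and each $H\in\mathcal H$, let $G^*,H^*$ be obtained by setting all coordinates in $D\setminus R$ to $1$. Then $G^*\in\mathcal G$ and $H^*\in\mathcal H$, and since $G^*,H^*$ already agree off $R$, disjointness forces their patterns on $R$ (vectors in $\prod_{j\in R}\{1,x_j\}$) to be distinct. So the $R$-patterns $\mathcal G_R$ and $\mathcal H_R$ occurring in $\mathcal G$ and in $\mathcal H$ are disjoint nonempty subsets of a cube $\{0,1\}^{|R|}$.

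Finally I bound $|\mathcal G|+|\mathcal H|$. Let $k:=|R|$. Split by the $R$-pattern: for $p\in\mathcal G_R$ and $q\in\mathcal H_R$ with $p,q$ disjoint as $0/1$ patterns, the corresponding fibres (subsets of $[n]^{m-k}$ on $D\setminus R$) must be cross-intersecting. If $p,q$ share a coordinate, there is no constraint. The aim is to show the total is at most $(t+2)n^{m-1}-(t+1)n^{m-2}$, presumably with the extremal configuration being $k=1$ and fibres like those coming from $K_{r,t}(n)$. The case $k\ge2$ loses a factor of $n$ per extra two-valued coordinate, so it should be easily smaller for the generic count. The $k=1$ case is a pair of complementary patterns with a cross-intersecting pair of families in $[n]^{m-1}$; there I would use the bound $|\mathcal A|+|\mathcal B|\le n^{m-1}+\ldots$ for shifted cross-intersecting families, together with the closure under $1$-shifts.

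I expect this last counting step to be the main obstacle: relating the bound to the specific $t$-dependent expression $(t+2)n^{r-t-1}-(t+1)n^{r-t-2}$. This likely requires using that the coordinates in $C$ are themselves not free, that is, re-inserting the other resistant directions and the role of the $t-1$ fixed coordinates, rather than just the crude bound $2^k n^{m-k}$.
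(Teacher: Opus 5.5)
Your setup is correct and reproduces the two substantive steps of the paper's proof. Your $R\neq\emptyset$ argument (shift every coordinate of $D\setminus R$ to $1$ and land on the all-ones vector in both $\mathcal G$ and $\mathcal H$) is the paper's argument that at least two coordinates are shift-resistant. Your disjointness $\mathcal G\cap\mathcal H=\emptyset$ is the paper's observation that the $\ell$-th entry of a member of $\F$ is determined by its other entries.

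The gap is that you then stop, and your diagnosis of what remains is wrong. No cross-intersecting analysis of fibres is needed. The crude count already suffices, provided you apply it to $\mathcal G\cup\mathcal H$ rather than to $\mathcal G$ and $\mathcal H$ separately. Both families are disjoint subsets of $\prod_{j\in R}\{1,x_j\}\times\prod_{j\in D\setminus R}X_j$, a set of size $2^k n^{m-k}$. So with $k\ge 1$, $m=r-t$ and $n\ge 2$,
\[
|\F|=|\mathcal G|+|\mathcal H|\le 2^k n^{r-t-k}\le 2n^{r-t-1}.
\]
Moreover,
\[
(t+2)n^{r-t-1}-(t+1)n^{r-t-2}-2n^{r-t-1}=n^{r-t-2}\bigl(t(n-1)-1\bigr)\ge 0,
\]
since $t\ge 1$, $n\ge 2$ and $r-t-2\ge 0$. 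This is exactly the paper's bound $2^{b-1}n^{r-b-t+1}$ with $b=k+1$.

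Disjointness is indispensable here. Bounding $\mathcal G$ and $\mathcal H$ separately by $2^kn^{m-k}$ each gives $4n^{r-t-1}$ when $k=1$. That exceeds the target, e.g.\ $3n^{r-2}-2n^{r-3}$ for $t=1$.

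Your guess that the extremal configuration is $k=1$ with fibres modelled on $K_{r,t}(n)$ is also off. $K_{r,t}(n)$, like every family of the form $\cH(n)$, is closed under replacing any coordinate by $1$. It is therefore coordinate-wise shifted and excluded by the hypothesis of the lemma. The lemma is not meant to be tight. Its right-hand side is simply a convenient quantity dominated by the bound in Theorem~\ref{thm:t-intersecting}, and via Lemma~\ref{lem:I1} by the one in Theorem~\ref{thm:s=1}. So a bound of order $2n^{r-t-1}$ is all that is required, and you already had every ingredient for it.
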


\begin{proof}
Let $b$ be the number of elements $\ell\in[r]$ such that $\F$ is $\ell$-shift-resistant. We have $b\ge 1$ because $\F$ is not coordinate-wise shifted. 

Suppose for the sake of a  contradiction that \(b=1\). Without loss of generality, suppose \(\mathcal{F}\) is \(1\)-shift-resistant. By Lemma~\ref{lem:shift_resistant} there exists
\(x\in X_1\) with \(x\neq 1\) such that \(N_{\mathcal{F}}(x,1)\neq\emptyset\). Since \(\mathcal{F}\) is
\(\ell\)-shifted for every \(\ell\neq 1\), both \((1,1,\dots,1)\) and \((x,1,\dots,1)\) belong to
\(\mathcal{F}\). Thus $S^{(1)}_x\bigl((x,1,\dots,1)\bigr)=(x,1,\dots,1)$, and hence \(S^{(1)}_x(N_\mathcal{F}(x,1))\cap N_\mathcal{F}(x,1)\neq\emptyset\), contradicting Lemma~\ref{lem:shift_resistant}.
Therefore \(b\ge 2\).

Suppose \(\mathcal{F}\) is \(\ell\)-shift-resistant for all \(1\le \ell\le b\). By
Lemma~\ref{lem:shift_resistant}, for each \(1\le \ell\le b\), there exists
\(x_\ell\in X_\ell\) with \(x_\ell\neq 1\) such that \(F[\ell]\in\{1,x_\ell\}\) for every
\(F\in\mathcal{F}\). We count the number of ways to specify an element \(F\in\mathcal{F}\) by choosing its
coordinates, leaving \(F[1]\) to be determined last. For each \(2\le \ell\le b\), there are two
possibilities for \(F[\ell]\). Since \(\bigcap\mathcal{F} = t-1\) by Lemma~\ref{lem:shift_resistant},
among the coordinates \(b+1\le \ell\le r\), exactly \(t-1\) are fixed, and each of the remaining
\(r-b-t+1\) coordinates can be chosen in at most \(n\) ways. Once all coordinates \(F[\ell]\) with \(2\le \ell\le r\) are specified, the value of \(F[1]\) is
uniquely determined. Indeed, Lemma~\ref{lem:shift_resistant} implies that
\(S^{(1)}_{x_1}(N_\mathcal{F}(x_1,1))\cap N_\mathcal{F}(x_1,1)=\emptyset\), so no two members of \(\mathcal{F}\) may
agree in every coordinate except the first. Therefore,
\[
|\mathcal{F}|\le 2^{\,b-1} n^{\,r-b-t+1}.
\]

Since \(n\ge 2\) and \(t\ge 1\), we have \(t(n-1)\ge 1\). Therefore,
\[
|\mathcal{F}|\le 2n^{\,r-t-1}
   \le 2n^{\,r-t-1} + n^{\,r-t-2}(tn - t - 1)
   = (t+2)n^{\,r-t-1} - (t+1)n^{\,r-t-2}.
\]
\end{proof}

With Lemma~\ref{lem:not_shifted}, the $t$-intersecting problem is now reduced to the case when $\F$ is coordinate-wise shifted. Let us first consider the $t=1$ case.
\begin{lem}\label{lem:I1}
For integers $n\ge 2$ and $r\ge 3$,
$$n^{r-1}-(n-1)^{r-1}+n-1\ge3n^{r-2}-2n^{r-3}.$$
\end{lem}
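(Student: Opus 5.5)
We need to prove $n^{r-1}-(n-1)^{r-1}+n-1\ge 3n^{r-2}-2n^{r-3}$ for $n\ge2$, $r\ge3$. The plan is to handle $r=3$ directly and then run an induction on $r$, with the main work going into the difference $n^{r-1}-(n-1)^{r-1}$.

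For $r=3$, the left side is $n^2-(n-1)^2+n-1=3n-2$, and the right side is $3n-2$. So equality holds and the base case is done.

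For general $r$, set $m=r-1\ge 2$. We expand
$$
n^{m}-(n-1)^{m}=\sum_{k=0}^{m-1}n^{k}(n-1)^{m-1-k},
$$
which is a sum of $m$ terms. The top term ($k=m-1$) is $n^{m-1}=n^{r-2}$. The next term ($k=m-2$) is $n^{m-2}(n-1)=n^{r-2}-n^{r-3}$. The remaining terms are nonnegative. From the first two terms alone we therefore get
$$
n^{r-1}-(n-1)^{r-1}+n-1\ \ge\ 2n^{r-2}-n^{r-3}+(\text{remaining terms})+n-1.
$$
It then remains to show that the remaining terms plus $n-1$ are at least $n^{r-2}-n^{r-3}=n^{r-3}(n-1)$. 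This is not automatic, since for large $n$ the remaining terms are only of order $n^{r-3}$ each. A cleaner route is induction on $r$.

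Let $L_r=n^{r-1}-(n-1)^{r-1}+n-1$ and $R_r=3n^{r-2}-2n^{r-3}$. Then $R_{r+1}=nR_r$. For the left side,
$$
L_{r+1}-nL_r=n(n-1)^{r-1}-(n-1)^{r}+(n-1)-n(n-1)=(n-1)^{r-1}-(n-1)^2,
$$
which is $\ge 0$ whenever $r\ge3$. Hence $L_{r+1}\ge nL_r\ge nR_r=R_{r+1}$, and the induction goes through.

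The only delicate step is checking that the difference $L_{r+1}-nL_r$ is nonnegative. Its sign comes down to $(n-1)^{r-1}\ge (n-1)^2$, which holds because $n-1\ge1$ and $r-1\ge2$. I expect no real obstacle beyond this bookkeeping.
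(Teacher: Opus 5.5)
Your proof is correct and is essentially the paper's argument: the paper first writes $L_r-R_r=(n-1)\bigl((n-2)n^{r-3}-(n-1)^{r-2}+1\bigr)$ and then inducts on $r$ for the cofactor by multiplying by $n$. Its inductive step is exactly your identity $L_{r+1}-nL_r=(n-1)^{r-1}-(n-1)^2\ge 0$ divided through by $n-1$, so the exploratory binomial-sum paragraph at the start of your proposal is unnecessary and can be dropped.
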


\begin{proof}
We prove by induction on $r\ge 3$. Note that
$$
n^{r-1}-(n-1)^{r-1}+n-1-3n^{r-2}+2n^{r-3}=(n-1)((n-2)n^{r-3}-(n-1)^{r-2}+1).
$$
Since $n\ge 2$, it suffices to show that 
$$
(n-2)n^{r-3}\ge (n-1)^{r-2}-1.
$$
It is clear that the inequality holds for $r=3$. For $r\ge 4$, by inductive assumption
\begin{align*}
(n-2)n^{r-3}&=n(n-2)n^{r-4}\\
&\ge n((n-1)^{r-3}-1)\\
&=(n-1)^{r-2}+(n-1)^{r-3}-n\\
&\ge (n-1)^{r-2}-1.
\end{align*}
This proves the desired inequality
\end{proof}

% Note that for any $R\subset[r]$,
% \begin{equation}
% |\{F\in X_1\times\dots\times X_r:\P(F)=R\}|=(n-1)^{r-|R|}
% \end{equation}

\begin{lem}\label{lem:shifted}
Let integer $r\ge 3$. Let $|X_\ell|=n\ge 2$ for every $1\le \ell \le r$. Let $\F\subset X_1\times\dots\times X_r$ be coordinate-wise shifted and non-trivial intersecting and of maximum size. Then $|\F|\le n^{r-1}-(n-1)^{r-1}+n-1.$
\end{lem}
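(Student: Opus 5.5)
By Lemma~\ref{lem:projection_maintain_intersection} with $t=1$, $\P(\F)\subseteq 2^{[r]}$ is a non-trivial intersecting family. Because $\F$ is coordinate-wise shifted, membership in $\F$ depends only on the projection. Indeed, if $\P(F)=\P(G)$ with $F\in\F$, then $G\in\F$. To see this, shift every coordinate of $F$ outside $\P(F)$ to $1$; this gives the sequence $\mathbf{1}_{\P(F)}$ pattern with all other entries $1$. That is not quite what we want, so instead argue as follows. Shiftedness gives that $F\in\F$ implies $F'\in\F$ whenever $\P(F')\supseteq\P(F)$ and $F'$ agrees with $F$ off $\P(F')$. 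This alone is not enough to get all $G$ with the same projection.

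The cleaner route is to use maximality. Let $\cH=\P(\F)$ and consider $\cH(n)$ as in Definition~\ref{defn:extension}. Two sequences $F,G$ whose projections contain intersecting sets $A,B\in\cH$ share a coordinate equal to $1$, so $\cH(n)$ is intersecting. Also $\F\subseteq\cH(n)$. Non-triviality of $\cH$ gives non-triviality of $\cH(n)$: for each $\ell$ there is $A\in\cH$ with $\ell\notin A$, and the sequence that is $1$ on $A$ and $2$ elsewhere lies in $\cH(n)$. By maximality, $\F=\cH(n)$ with $\cH$ an up-set. So it suffices to maximize $|\cH(n)|=\sum_{R\in\cH}(n-1)^{r-|R|}$ over non-trivial intersecting up-sets $\cH\subseteq 2^{[r]}$.

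This is a weighted Hilton--Milner problem on $2^{[r]}$ with weight $w(R)=(n-1)^{r-|R|}$, i.e.\ the product measure with $p=1/n$ scaled by $n^r$. The target is the value of $W_r(n)$. I would prove it by shifting inside $2^{[r]}$, or by a direct argument on the minimal sets of $\cH$. Since $\cH$ is non-trivial and intersecting, it is not a star, so its minimal sets have size $\ge2$.

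I would split on $\cH$. Fix a minimal member $A$ of smallest size $a$. Every member meets $A$, and for each $\ell\in A$ some member avoids $\ell$.

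\emph{Case $a=2$, say $A=\{1,2\}$.} Each member either contains both $1$ and $2$, or contains exactly one of them together with whatever is needed to meet the sets avoiding the other. Bound the members containing $1$ but not $2$ by requiring them to meet some fixed member $B$ avoiding $1$, and symmetrically for the other side. This is the type $K_{r,1}$, giving about $3n^{r-2}-2n^{r-3}$, which Lemma~\ref{lem:I1} shows is at most the target.

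\emph{General approach.} The standard argument is to compare with the star at some element $\ell$. If $\cH$ contains $\{\ell\}\cup\{j\}$ for all $j\ne\ell$, non-triviality forces the member avoiding $\ell$ to be $[r]\setminus\{\ell\}$, which yields exactly $W_r$. Otherwise, a weighted Kruskal--Katona/Hilton--Milner estimate, made by counting sequences by projection, gives a smaller value.

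The main obstacle is making this weighted Hilton--Milner bound rigorous for all $n\ge2$ rather than only for large $n$. I would try induction on $r$ via the decomposition $\cH=\cH_r\cup\cH_{\bar r}$ by whether $r\in R$, with weights $n-1$ on the sets omitting $r$. I would then apply the Erd\H{o}s--Ko--Rado-type product measure bound (weight $\le n^{r-1}$, from Meyer) to the link, and the inductive non-trivial bound to the other part, using Lemma~\ref{lem:I1} to dispose of the $K$-type configurations.
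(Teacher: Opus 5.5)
There is a genuine gap: you never prove the weighted bound on $2^{[r]}$, and that bound is the entire content of the lemma.

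Your reduction is correct and matches the paper's first step. By Lemma~\ref{lem:projection_maintain_intersection}, $\cH=\P(\F)$ is non-trivial intersecting. Since $\cH(n)\supseteq\F$ is again non-trivial intersecting (and coordinate-wise shifted, in case maximality is meant among shifted families), maximality gives $\F=\cH(n)$ and $|\F|=\sum_{R\in\cH}(n-1)^{r-|R|}$. After that you only list possible routes (shifting in $2^{[r]}$, minimal sets, induction on $r$ with Meyer's bound) and call the rest the ``main obstacle''. The one concrete case you treat is also wrong. $W_r$ itself has minimal members $\{\ell,r\}$ of size $2$, so your case $a=2$ contains the extremal family. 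It therefore cannot be bounded by roughly $3n^{r-2}-2n^{r-3}$: $|W_r(n)|=(r-1)n^{r-2}-O(n^{r-3})$ is larger for $r\ge5$ and large $n$. The reason is that meeting one fixed member $B$ avoiding an element is a weak constraint when $B$ is large; in $W_r$ the only member avoiding $r$ is $[r-1]$.

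The missing idea is a short level-by-level count that works for every $n\ge2$ with no asymptotics. Let $\rho_i$ be the number of $i$-sets in $\cH$.
\begin{itemize}
\item Non-triviality gives $\rho_0=\rho_1=0$.
\item An intersecting family contains at most one set from each complementary pair, so $\rho_i+\rho_{r-i}\le\binom{r}{i}$.
\item Erd\H{o}s--Ko--Rado gives $\rho_i\le\binom{r-1}{i-1}$ for $2\le i\le r/2$.
\end{itemize}
Since $(n-1)^{r-i}\ge(n-1)^i$ for $i\le r/2$, it pays to put as much weight as possible on level $i$, which yields
\[
\rho_i(n-1)^{r-i}+\rho_{r-i}(n-1)^{i}\le\binom{r-1}{i-1}(n-1)^{r-i}+\binom{r-1}{r-i-1}(n-1)^{i}.
\]
Sum this over the pairs $\{i,r-i\}$, bounding the middle level directly by Erd\H{o}s--Ko--Rado when $r$ is even, and use $\rho_{r-1}\le r$ and $\rho_r\le1$. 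This gives
\[
|\F|\le\sum_{j=2}^{r-2}\binom{r-1}{j-1}(n-1)^{r-j}+r(n-1)+1=n^{r-1}-(n-1)^{r-1}+n-1.
\]
No analysis of minimal sets, no induction on $r$, and no appeal to Lemma~\ref{lem:I1} is needed here; that lemma is only used for the non-shifted case.
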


\begin{proof}
By Lemma \ref{lem:projection_maintain_intersection}, $\P(\F)\subset 2^{[r]}$ is non-trivial intersecting. Moreover, since $\F$ is of maximum size, we know $\F$ contains all edges whose projection is in $\P(\F)$. Thus

\begin{equation}
|\F|=\sum_{P\in\P(\F)} (n-1)^{r-|P|}.
\end{equation}

Let $\rho_{i}$ denote the number of $i$-sets in $\P(\F)$. By the intersecting property and the Erd\H{o}s-Ko-Rado Theorem, the following two inequalities are immediate.

\begin{equation}\label{eq:i&r-i}
\rho_{i}+\rho_{r-i}\le \binom{r}{i},
\end{equation}

\begin{equation}\label{eq:i<r/2}
\rho_{i}\le \binom{r-1}{i-1}~~\text{for}~~2\le i\le \frac{r}{2}.
\end{equation}

The nontriviality implies

\begin{equation}\label{eq:0&1}
\rho_{1}=\rho_{0}=0.
\end{equation}

From (\ref{eq:i&r-i}) and (\ref{eq:i<r/2}) we infer that for $i\le r/2$,
$$
\begin{aligned}
\rho_{i}(n-1)^{r-i}+\rho_{r-i}(n-1)^i
&\le \rho_{i}((n-1)^{r-i}-(n-1)^{i})+(\rho_{i}+\rho_{r-i})(n-1)^{i}\\
&\le \binom{r-1}{i-1}(n-1)^{r-i}-\binom{r-1}{i-1}(n-1)^i+\binom{r}{i}(n-1)^i\\
&=\binom{r-1}{i-1}(n-1)^{r-i}+\binom{r-1}{r-i-1}(n-1)^{i}.
\end{aligned}
$$

These inequalities together with (\ref{eq:0&1}) imply
$$
|\F|\le \sum_{2\le j\le r-2}\binom{r-1}{j-1}(n-1)^{r-j}+\binom{r}{r-1}(n-1)+\binom{r}{r}
= n^{r-1}-(n-1)^{r-1}+n-1.
$$
% showing that $|\F|$ is at most the size of the Hilton-Milner-type family.
\end{proof}

Now we are ready to prove Theorem \ref{thm:s=1}.
\begin{proof}[Proof of Theorem \ref{thm:s=1}]
Let $\F\in[n]^{r}$ be non-trivial intersecting. We apply iteratively to $\F$ coordinate-wise shifts that do not destroy non-triviality until no more such shift can be applied. During the process the number of edges in $\F$ and the non-trivial $t$-intersecting property are maintained. In the end, for each $1\le \ell\le r$, $\F$ is either $\ell$-shifted or $\ell$-shift-resistant. 

If $\F$ is not coordinate-wise shifted, then by Lemma~\ref{lem:not_shifted} and Lemma~\ref{lem:I1}, 
$$
|\F|\le 3n^{r-2}-2n^{r-3}\le n^{r-1}-(n-1)^{r-1}+n-1.
$$

If $\F$ is coordinate-wise shifted, then by Lemma~\ref{lem:shifted} we also have 
$$
|\F|\le n^{r-1}-(n-1)^{r-1}+n-1.
$$

\end{proof}

\section{Sunflower Method}\label{sec:sunflower}
A collection of sets $F_1,\dots,F_t$ form a \textit{sunflower ($\Delta$-system)} with $t$ petals if, letting $C=F_1\cap\dots\cap F_t$, the sets $F_i\setminus C$ are pairwise disjoint. This concept was introduced by Erd\H{o}s and Rado~\cite{erdos1960intersection}, who established the following classic upper bound for the size of any set system excluding a sunflower of a given size.

\begin{lem}[Erd\H{o}s-Rado, \cite{erdos1960intersection}]\label{lem: Erdos-Rado}
Let $\mathcal{F}$ be a collection of sets such that $|F| \le r$ for all $F \in \mathcal{F}$. If $\mathcal{F}$ does not contain a sunflower with $t$ petals, then $|\mathcal{F}| \le r! \, (t-1)^r$.
\end{lem}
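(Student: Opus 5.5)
We prove the Erd\H{o}s--Rado bound by induction on $r$, showing the slightly stronger statement that any family $\mathcal{F}$ of sets of size at most $r$ with no sunflower of $t$ petals satisfies $|\mathcal{F}|\le r!\,(t-1)^r$. If $t\le 1$ the bound is trivial: with $t=1$ any single set is a sunflower with one petal, so $\mathcal{F}=\emptyset$. Hence assume $t\ge 2$. For the base case $r=0$, $\mathcal{F}\subseteq\{\emptyset\}$, so $|\mathcal{F}|\le 1=0!\,(t-1)^0$.

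For the inductive step, take $\mathcal{M}=\{M_1,\dots,M_k\}\subseteq\mathcal{F}$, a maximal subfamily of pairwise disjoint sets. We may assume $\emptyset\notin\mathcal{F}$, or else treat it separately: $\emptyset$ together with any $t-1$ other sets is not automatically a sunflower, so it is cleaner to simply note it contributes at most one set, and this is absorbed in the slack. Pairwise disjoint sets form a sunflower with empty core, so $k\le t-1$. Let $U=\bigcup_i M_i$; then $|U|\le r(t-1)$. By maximality, every nonempty $F\in\mathcal{F}$ meets $U$.

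For each $x\in U$, consider $\mathcal{F}_x=\{F\setminus\{x\}:x\in F\in\mathcal{F}\}$. This is a family of sets of size at most $r-1$. It contains no sunflower with $t$ petals. Indeed, if $F_1\setminus\{x\},\dots,F_t\setminus\{x\}$ formed one with core $C$, then $F_1,\dots,F_t$ would form a sunflower with core $C\cup\{x\}$ and the same pairwise disjoint petals. The map $F\mapsto F\setminus\{x\}$ is injective on sets containing $x$. So by induction $|\mathcal{F}_x|\le (r-1)!\,(t-1)^{r-1}$.

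Summing over $x\in U$ gives
\[
|\mathcal{F}|\le \sum_{x\in U}|\mathcal{F}_x| \le r(t-1)\cdot (r-1)!\,(t-1)^{r-1}=r!\,(t-1)^r .
\]
If $\emptyset\in\mathcal{F}$, note that $\mathcal{M}$ can be taken to include $\emptyset$ without enlarging $U$. Then $\emptyset$ is among at most $t-1$ disjoint members, so only $k-1\le t-2$ nonempty sets contribute to $U$, and $|U|\le r(t-2)$. This leaves enough slack to absorb the single extra set $\emptyset$, since $r(t-2)(r-1)!\,(t-1)^{r-1}+1\le r!\,(t-1)^r$.

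The main subtlety is verifying that the link $\mathcal{F}_x$ inherits the sunflower-free property. The argument above handles it, because adding $x$ back to every set only enlarges the common core and leaves the petals unchanged. The rest is bookkeeping.
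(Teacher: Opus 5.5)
The paper gives no proof of this lemma; it simply cites Erd\H{o}s and Rado. Your argument is the classical Erd\H{o}s--Rado proof (take a maximal pairwise disjoint subfamily $\mathcal{M}$, note $|\bigcup\mathcal{M}|\le r(t-1)$, then apply induction to the links $\mathcal{F}_x$), and it is correct.

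Your separate slack computation for $\emptyset\in\mathcal{F}$ is genuinely needed. The lemma allows sets of size less than $r$, as in its application to the base $\mathcal{B}$ in Lemma~\ref{lem:base}. Simply discarding $\emptyset$ would only give $r!\,(t-1)^r+1$. So your earlier phrase ``we may assume $\emptyset\notin\mathcal{F}$'' should point to that final paragraph, not present itself as a free reduction.
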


Building on this result, Frankl~\cite{frankl1978erdos} developed the Sunflower Method to prove stability for the Erd\H{o}s-Ko-Rado theorem. In this section we use the Sunflower Method to prove Theorem \ref{thm:matching_n_large}.

Given a set family $\F \subseteq 2^{[n]}$, a set $A \in \F$, and a nonempty proper subset $C \subsetneq A$, the operation of \emph{shrinking} $\F$ with respect to $C$ produces a new family $\F'$ by replacing all supersets of $C$ in $\F$ with the single set $C$. 
For example, let $\F=\{\{1,2\},\{1,3,4\},\{1,3,5\},\{2,3\},\{3,4,5\}\}$. Shrinking $\F$ with respect to $\{1\}$ removes the supersets $\{1,2\}$, $\{1,3,4\}$, and $\{1,3,5\}$ and adds $\{1\}$, resulting in $\F'=\{\{1\},\{2,3\},\{3,4,5\}\}$.

\begin{defn}
Given $\F \subseteq 2^{[n]}$, a \emph{base} $\B$ of $\F$ is a set family such that
\begin{itemize}
    \item $\B$ is an antichain, i.e. for any distinct $B$ and $B'\in\B$, $B\not\subset B'$;
    \item for any $F\in\F$ there exists $B\in\B$ such that $B\subseteq F$;
    \item $\nu(\B)=\nu(\F)$;
    \item for any $C\subsetneq B\in\B$, shrinking $\B$ with respect to $C$ increases the matching number.
\end{itemize}
\end{defn}

One can construct a base via successive shrinking. Continuing the previous example, we can shrink $\F'$ further with respect to $\{3\}$ to produce the base $\B=\{\{1\},\{3\}\}$, as the matching number remains $2$. However, we cannot shrink $\F'$ with respect to $\{4\}$ because the resulting family $\{\{1\},\{2,3\},\{4\}\}$ has a matching number of $3$, which violates the non-increasing condition.

% The following properties of a base is immediate by definition.
% \begin{prop}\label{prop:base}
% Let integer $r\ge 3$, and let $\F\subseteq\binom{[n]}{r}$. Let $\B$ be a base of $\F$. Then
% \begin{itemize}
%     \item[(i)] for any $F\in\F$ there exists $B\in\B$ such that $B\subseteq F$;
%     \item[(ii)] $\B$ is an antichain, i.e. for any distinct $B$ and $B'\in\B$, $B\not\subset B'$.
% \end{itemize}
% \end{prop}

\begin{lem}\label{lem:base}
Let $r \ge 3$ and $n > s \ge 2$ be integers. Let $\mathcal{F}$ be an $r$-partite $r$-graph with partite sets $X_1, \dots, X_r$ such that $\nu(\mathcal{F}) = s < \tau(\mathcal{F})$. If $\mathcal{B}$ is a base of $\mathcal{F}$, then:
\begin{itemize}
    \item[(i)] $\nu(\mathcal{B}) = s < \tau(\mathcal{B})$;
    \item[(ii)] $\mathcal{B}$ contains no sunflower with $rs+1$ petals, and consequently $|\mathcal{B}| \le r! \, (rs)^r$.
\end{itemize}
\end{lem}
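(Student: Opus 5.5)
For (i), the third defining property of a base gives $\nu(\B)=\nu(\F)=s$ directly, so the content is $\tau(\B)>s$. I would argue by contradiction. Suppose $T$ is a transversal of $\B$ with $|T|\le s$. By the second defining property, every $F\in\F$ contains some $B\in\B$. Since $T$ meets $B\subseteq F$, it also meets $F$. Hence $T$ is a transversal of $\F$, so $\tau(\F)\le s$, contradicting the hypothesis. This uses nothing about $r$-partiteness.

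For (ii), I would take a sunflower $B_1,\dots,B_{rs+1}\in\B$ with kernel $C=\bigcap_i B_i$ and derive a contradiction.

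First, $C$ is a proper subset of each $B_i$. Otherwise some $B_i=C\subseteq B_j$ for $j\ne i$, contradicting that $\B$ is an antichain.

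Next, I would check that $C\neq\emptyset$. If $C=\emptyset$, the $rs+1$ petals are pairwise disjoint, so $\nu(\B)\ge rs+1>s$, contradicting (i).

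So $C$ is a nonempty proper subset of $B_1$. By the fourth defining property, shrinking $\B$ with respect to $C$ yields a family $\B'$ with $\nu(\B')>s$. Take a matching $M$ in $\B'$ of size $s+1$. If $C\notin M$, then $M\subseteq\B$, contradicting $\nu(\B)=s$. So $C\in M$, and the other $s$ members $D_1,\dots,D_s$ lie in $\B$ and are disjoint from $C$.

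Each $B\in\B$ has at most $r$ vertices, because $\B$ is obtained by shrinking subsets of edges of $\F$. Therefore $D_1\cup\dots\cup D_s$ has at most $rs$ vertices. The sets $B_i\setminus C$ are pairwise disjoint and there are $rs+1$ of them, so some $B_i\setminus C$ avoids $D_1\cup\dots\cup D_s$. As $C$ also avoids every $D_j$, that $B_i$ is disjoint from every $D_j$. Then $\{B_i,D_1,\dots,D_s\}$ is a matching of size $s+1$ in $\B$, a contradiction.

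Finally, apply Lemma~\ref{lem: Erdos-Rado} with $|B|\le r$ and petal bound $t=rs+1$, giving $|\B|\le r!\,(rs)^r$.

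The main point needing care is the disjointness step, specifically that the matching in $\B'$ must use $C$ and that the replacement by a petal works. Both follow from the pigeonhole count above.
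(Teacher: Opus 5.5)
Your proof is correct and follows the paper's argument essentially step for step. Part (i) uses the fact that a transversal of $\B$ is a transversal of $\F$. Part (ii) shrinks at the kernel $C$ to force a matching $C, D_1,\dots,D_s$, then applies pigeonhole to the $rs+1$ disjoint petals against the at most $rs$ vertices of $D_1\cup\dots\cup D_s$; you find a petal avoiding this union where the paper concludes the union has at least $rs+1$ vertices, which is the same count, and you are slightly more explicit that $C\subsetneq B_i$ (via the antichain property) and that the matching in the shrunk family must contain $C$.
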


\begin{proof}
For (i), by the definition of a base, we have $\nu(\mathcal{B}) = \nu(\mathcal{F}) = s$. Suppose for the sake of a contradiction that $\tau(\mathcal{B}) = s$. Then there exists a transversal of $\mathcal{B}$ of size $s$. Since every set in $\mathcal{F}$ contains a set in $\mathcal{B}$, any transversal of $\mathcal{B}$ is also a transversal of $\mathcal{F}$. This implies $\tau(\mathcal{F}) \le s$, contradicting the assumption that $\tau(\mathcal{F}) > s$. Thus, we must have $\tau(\mathcal{B}) > s$.

For (ii), suppose to the contrary that $\mathcal{B}$ contains a sunflower with $rs+1$ petals, denoted by $F_1, \dots, F_{rs+1}$. Let $C = \bigcap_{i=1}^{rs+1} F_i$ be the core of the sunflower. 
If $C = \emptyset$, then the sets $F_1, \dots, F_{rs+1}$ are pairwise disjoint, forming a matching of size $rs+1$. Since $r \ge 1$ and $s \ge 1$, this exceeds $s$, contradicting $\nu(\mathcal{B}) = s$. 

Thus, we assume $C \ne \emptyset$. Since $\mathcal{B}$ is a base, it cannot be shrunk further without increasing the matching number. Specifically, shrinking $\mathcal{B}$ with respect to $C$ must increase the matching number (otherwise we would have performed this operation). This implies that there exists a matching $\{M_1, \dots, M_{s+1}\}$ in the family obtained by shrinking $\mathcal{B}$ with respect to $C$. Without loss of generality, let $M_{s+1} = C$. Then $\{M_1, \dots, M_s\}$ must be a matching in $\mathcal{B}$ consisting of sets disjoint from $C$.

Let $U = \bigcup_{j=1}^s M_j$. Since $\{M_1, \dots, M_s\}$ is a matching of size $s$ in $\mathcal{B}$ and $\nu(\mathcal{B})=s$, we cannot extend this matching. Therefore, every other set in the sunflower must intersect $U$. Specifically, for each $i \in \{1, \dots, rs+1\}$, the petal $F_i \setminus C$ must intersect $U$.
However, the sets $\{F_i \setminus C\}_{i=1}^{rs+1}$ are pairwise disjoint. This implies that $U$ contains at least $rs+1$ distinct elements (one from each disjoint petal). 
On the other hand, since $\mathcal{F}$ is an $r$-graph, $|M_j| \le r$, so $|U| \le \sum_{j=1}^s |M_j| \le rs$. 
This gives the contradiction $rs+1 \le |U| \le rs$.

Therefore, $\mathcal{B}$ contains no sunflower with $rs+1$ petals. By Lemma~\ref{lem: Erdos-Rado}, we conclude that $|\mathcal{B}| \le r! \, (rs)^r$.
\end{proof}

To prove Theorem~\ref{thm:matching_n_large}, we establish the following stronger result.

\begin{thm}\label{thm:matching_n_large'}
Let $r\ge 3$ and $s\ge 1$ be integers. There exist $\delta > 0$ and $n>0$ such that for all $n_1 \ge \dots \ge n_r$ satisfying $n_r \ge n$ and $n_1 \le (1+\delta) n_r$, the following holds:

If $r \neq 4$, then
$$
m_0(s,n_1,\dots,n_r) = s\prod_{\ell=1}^{r-1}n_\ell - \prod_{\ell=1}^{r-1}(n_\ell-1) + n_r - s.
$$

If $r = 4$, then
$$
m_0(s,n_1,\dots,n_4) = \max\left\{ s\prod_{\ell=1}^{3}n_\ell - \prod_{\ell=1}^{3}(n_\ell-1) + n_4 - s, \; n_1\left(sn_2n_3 - (n_2-1)(n_3-1) + n_4 - s\right) \right\}.
$$
\end{thm}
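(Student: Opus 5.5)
We prove the lower bound by construction and the upper bound with the sunflower/base machinery of Lemma~\ref{lem:base}: in the large-$n$ regime, $|\F|$ is essentially determined by the elements of the base of size at most $2$.

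\textbf{Lower bounds.} The first expression is attained by the asymmetric analogue of $\mathcal{E}(r,s,n)$, namely $W_r(n_1,\dots,n_r)\cup\bigcup_{i=2}^s\big(X_1\times\dots\times X_{r-1}\times\{i\}\big)$. A direct count gives the stated size. To check $\nu\le s<\tau$, note that $W_r$ is intersecting and non-trivial, and the layers $x_r=i$ for $i\ge2$ add exactly one to the matching number each.

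For $r=4$ the second expression comes from ``lifting'' the $3$-partite extremal family on $X_2\times X_3\times X_4$. Take the family $\mathcal{E}(3,s;n_2,n_3,n_4)$ on coordinates $2,3,4$, whose size is $sn_2n_3-(n_2-1)(n_3-1)+n_4-s$, and leave coordinate $1$ free. This yields size $n_1(\cdot)$. Matching number and non-triviality are inherited because coordinate $1$ is unconstrained. For $r\neq4$ and for other lifts, we would verify that the corresponding lifted families are smaller when $n_1\le(1+\delta)n_r$.

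\textbf{Upper bound via a base.} Let $\F$ be extremal and let $\B$ be a base. By Lemma~\ref{lem:base}, $\nu(\B)=s<\tau(\B)$ and $|\B|\le r!(rs)^r=O(1)$. Each $B\in\B$ covers at most $\prod_{\ell\notin \mathrm{coords}(B)} n_\ell$ edges, so
\[
|\F|\le \sum_{B\in\B}\prod_{\ell\notin B}n_\ell .
\]
Write $\B_j$ for the members of size $j$. Then $\B_1$ contributes order $n^{r-1}$ per set, $\B_2$ contributes order $n^{r-2}$, and $\B_{\ge3}$ contributes $O(n^{r-3})$ in total. Since $\nu(\B_1)\le s$, $\B_1$ consists of at most $s$ singletons, and $|\B_1|=s$ is impossible: each of the $s$ singletons would have to belong to every maximum matching, forcing $\tau\le s$ by a König-type argument applied to the remaining members. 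Hence $|\B_1|\le s$ and, comparing with the construction, $|\B_1|= s-1$ or else $|\F|$ is too small by order $n^{r-1}$. So $\B_1=\{\{v_1\},\dots,\{v_{s-1}\}\}$.

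\textbf{Reduction to the case $s=1$.} Remove the edges through the $v_i$, together with the vertices $v_i$. What remains is a family $\F'$ with $\nu(\F')\le1$ (a matching of size $2$ in $\F'$, together with the $v_i$, would give $\nu(\F)>s$, since the $v_i$ each have huge degree and one can route around) and $\tau(\F')\ge2$. So $\F'$ is a non-trivial intersecting family on a slightly smaller box.

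The main obstacle is to bound $|\F'|$ sharply, accounting both for the edges through the $v_i$ and for their overlaps. If the $v_i$ all lie in one part $X_\ell$, the count becomes $(s-1)\prod_{m\ne\ell}n_m$ plus a non-trivial intersecting family on the box with $X_\ell$ shrunk by $s-1$. To bound the latter asymptotically, use the base again: its pairs form an intersecting structure, which leads to either a Hilton--Milner type family $W_r$ with centre in some part $\ell$, or (only competitive when $r=4$) a triangle-type structure on three coordinates $K$ times a free coordinate. Optimizing over the choice of $\ell$ produces the factor $\prod_{m\ne r}(n_m-1)$ with the smallest part playing the role of $X_r$. The condition $n_1\le(1+\delta)n_r$ is what lets us compare the $O(n^{r-2})$ terms precisely. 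If the $v_i$ lie in different parts, the overlaps cost $\Omega(n^{r-2})$ and we lose.

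The hardest step is this second-order comparison: showing that any base with $|\B_1|=s-1$ whose pair-structure is not exactly that of $W_r$ (or, for $r=4$, of the lifted $K$) loses $\Omega(n^{r-2})$. To handle it, enumerate the possible intersecting pair-graphs on the coordinate-labelled vertex set (stars and triangles), and show that the $O(n^{r-3})$ terms cannot compensate.
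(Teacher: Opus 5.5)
Your outline is correct and shares the paper's skeleton: a bounded base via Lemma~\ref{lem:base}, exactly $s-1$ singletons, and a star-versus-triangle analysis of the $2$-sets of the non-trivial intersecting remainder. The genuine difference is in the step $s\ge 2$.

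\textbf{How the paper does it.} It deletes one singleton $y$ and applies the induction hypothesis for $s-1$ on the box with one part shortened by one. It then checks by explicit algebra that putting $y$ in a part larger than $X_r$ never helps. Because the inductive bound is already sharp, overlaps between the stars of different singletons never have to be estimated.

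\textbf{How you do it.} You strip all $s-1$ singletons at once and split into two cases.
\begin{itemize}
\item If the singletons occupy at least two parts $X_a,X_b$, two of their stars share $\prod_{m\ne a,b}n_m\ge n_r^{r-2}$ edges. This beats the gap $m_0(1;n_1,\dots,n_r)-m_0(1;n_1,\dots,n_{r-1},n_r-s+1)$, which is $s-1$ for $r\ne 4$ and at most $(s-1)n_1$ for $r=4$.
\item If they all lie in one part $X_\ell$, you need the exact $s=1$ value on the box with $n_\ell$ replaced by $n_\ell-s+1$. You then compare over $\ell$, which is the same kind of algebra as the paper's Cases~1.2 and~2.2.
\end{itemize}

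\textbf{What each route buys.} Yours isolates the $s=1$ problem as the only place where structure matters, and it makes transparent why all singletons sit in $X_r$. The price is the extra overlap case. You must also apply the $s=1$ result to a box that may be reordered (e.g.\ $n_\ell-s+1<n_r$), so the constants for $s=1$ must be chosen with some slack. On the other hand, you only need the upper estimate $\sum_{B\in\B}n(B)$ against the explicit constructions, so the paper's inclusion--exclusion lower bound is unnecessary.

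\textbf{Two small repairs.}
\begin{itemize}
\item The case $|\B_1|=s$ is excluded by an antichain argument, not a K\"onig-type one. Every non-singleton member of $\B$ avoids the $s$ singletons and would complete an $(s+1)$-matching, so $\B=\B_1$ and $\tau(\B)=s$, a contradiction.
\item Once the pair structure is extremal, the exact count needs that the rest of the base is forced. Every member avoiding the star centre contains all $r-1$ leaves, and a triangle admits no further members.
\end{itemize}

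\textbf{On the $r=4$ construction.} Your version, $\mathcal{E}(3,s;n_2,n_3,n_4)$ lifted with coordinate $1$ free, is the right one. The triangle has to sit on coordinates $2,3,4$ to produce $n_1(sn_2n_3-(n_2-1)(n_3-1)+n_4-s)$. The paper's $T$, written on coordinates $1,2,3$, gives $n_4(n_1+n_2+n_3-2)$ for $s=1$, which is smaller when $n_1>n_4$.
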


Theorem~\ref{thm:matching_n_large} follows immediately from this result by setting $n_1 = \dots = n_r$.

\begin{proof}
We first describe the lower bound constructions. Recall that 
$$
W_r=\{\{\ell,r\}:1\le \ell\le r-1\}\cup\{\{1,\dots,r-1\}\}.
$$
Note that
$$
W_r(n_1,\dots,n_r)\cup\l(\bigcup_{i=2}^s [n_1]\times\dots\times[n_{r-1}]\times\{i\} \r)
$$
is an $r$-partite $r$-graph with $\nu(\F)=s<\tau(\F)$ and its size is $s\prod_{\ell=1}^{r-1}n_\ell - \prod_{\ell=1}^{r-1}(n_\ell-1) + n_r - s$. 

For the 4-uniform case, consider $T=\{\{1,2\}, \{1,3\},\{2,3\}\}$ as a hypergraph on $[4]$. Then
$$
T(n_1,n_2,n_3,n_4)\cup\l(\bigcup_{i=2}^s [n_1]\times[n_2]\times[n_{3}]\times\{i\} \r)
$$
is a $4$-partite $4$-graph with $\nu(\F)=s<\tau(\F)$ and its size is $n_1\left(sn_2n_3 - (n_2-1)(n_3-1) + n_4 - s\right)$.

The constructions described above give the lower bounds.

For upper bounds, let $\F$ be an $r$-partite $r$-graph with partite sets $X_1,\dots,X_r$, where $|X_\ell|=n_\ell$ for $1\le \ell\le r$. Assume that $\nu(\F)=s<\tau(\F)$, and $|\F|=m_0(s,n_1,\dots,n_r)$; that is, $\F$ has maximum size among all $r$-partite $r$-graphs with these properties. We need to show that if $r\not=4$, then
\begin{equation}\label{eq:r not 4}
|\F|\le s\prod_{\ell=1}^{r-1}n_\ell - \prod_{\ell=1}^{r-1}(n_\ell-1) + n_r - s,
\end{equation}
and if $r=4$, then
\begin{equation}\label{eq:r = 4}
|\F|\le \max\left\{ s\prod_{\ell=1}^{3}n_\ell - \prod_{\ell=1}^{3}(n_\ell-1) + n_4 - s, \; n_1\left(sn_2n_3 - (n_2-1)(n_3-1) + n_4 - s\right) \right\}.
\end{equation}

Let $\B$ be a base of $\F$ and let $\rho_i$, $1\le i\le r$, be the number of $i$-sets in $\B$. For a set $Y$, $|Y|\le r$ define $\F(Y)=\{F\in\F:Y\subset F\}$ and note $\F(Y)=\emptyset$ for $Y$ with $\max|Y\cap X_i|\ge 2$. Define 
$$
n(Y)=\prod\{n_\ell:Y\cap X_\ell=\emptyset\}.
$$
Let $\eps>0$ be a sufficiently small constant. By Lemma~\ref{lem:base} $(i)$ and the maximality of $\F$, $|\F(B)|=n(B)$ for any $B\in\B$. Let $\delta\ll\eps$. By definition of a base, and the inequalities $n_i\le n_1\le (1+\delta)n_r$, 
\begin{equation}\label{eq:upper}
|\F|\le \sum_{B\in\B}|\F(B)|= \sum_{B\in\B}n(B)\le \sum_{i=1}^r\rho_i((1+\delta)n_r)^{r-i}\le (1+\eps)\sum_{i=1}^r\rho_in_r^{r-i}.
\end{equation}

On the other hand, let $n\gg \frac{1}{\eps}$. Note that for any $Y_1\subsetneq Y_2$, $\frac{n(Y_2)}{n(Y_1)}\le \frac{1}{n_r}$. Using this, together with Lemma~\ref{lem:base} $(ii),$ and $n_r\ge n$,
\begin{equation}\label{eq:lower}
\begin{aligned}
|\F|&\ge \sum_{B\in\B}|\F(B)|-\sum_{B,B'\in\B}|\F(B\cup B')|\\
&=\sum_{B\in\B}\l(|\F(B)|-\frac{1}{2}\sum_{B'\in\B, B'\not=B}|\F(B\cup B')|\r)\\
&=\sum_{B\in\B}\l(1-\frac{1}{2}\sum_{B'\in\B, B'\not=B}\frac{n(B\cup B')}{n(B)}\r)n(B)\\
&\ge \sum_{B\in\B}\l(1-\frac{|\B|}{2n_r}\r)n(B)\\
&\ge (1-\eps)\sum_{i=1}^r\rho_in_r^{r-i}.
\end{aligned}    
\end{equation}

To maximize $|\F|$, we need $\rho_1$ to be as large as possible. Since $\nu(\B)=s$, we have $\rho_1\le s$. Suppose for contradiction that $\rho_1=s$. Then $\B$ contains $s$ singletons and no set of size larger than 1. Indeed, if $B\in\B$ is a set of size at least 2, since $\B$ is an antichain, $B$ does not contain any of the $s$ singletons, and hence $B$ together with the $s$ singletons would form a matching of size $s+1$ of $\B$, contradicting $\nu(\B)=s$. Hence $\B$ consists of $s$ singletons only, implying that $\tau(\B)=s$, which contradicts Lemma~\ref{lem:base} $(i)$. Therefore, $\rho_1<s$, and by maximality of $|\F|$, $\rho_1=s-1$.

We will prove by induction on $s$. When $s=1$, $\rho_1=0$. To maximize $|F|$, by (\ref{eq:upper}) and (\ref{eq:lower}), we want $\rho_2$ to be as large as possible. Since the 2-sets in $\B$ is intersecting, they either form a triangle or a star. We split the proof into two cases.

\noindent\textbf{Case 1: If the 2-sets in $\B$ form a triangle}, then since $\B$ is an antichain and $\nu(\B)=1$, $\B$ contains no set of size larger than 2. Thus $\B$ contains three 2-sets only. It is clear that $|\F|$ achieves maximum when the three vertices of the triangle lie in $X_{r-2},X_{r-1}$ and $X_{r}$, and in such case, 
$$
|\F|=\prod_{\ell=1}^{r-3}n_\ell(n_{r-2}+n_{r-1}+n_{r}-2).
$$
When $r=3$, $|\F|=n_1+n_2+n_3-2$, satisfying (\ref{eq:r not 4}); when $r=4$, $|\F|=n_1(n_{2}+n_{3}+n_{4}-2)$, satisfying (\ref{eq:r = 4}).\\
When $r\ge 5$, since $n_1\le (1+\delta)n_r$ where $\delta$ is sufficiently small, and $n_r\ge n$ being sufficiently large, by viewing all $n_i$ as $(1+o(1))n_r$ we have
$$
|\F|=\prod_{\ell=1}^{r-3}n_\ell(n_{r-2}+n_{r-1}+n_{r}-2)<\prod_{\ell=1}^{r-1}n_\ell-\prod_{\ell=1}^{r-1}(n_\ell-1)+n_r-1,
$$
satisfying (\ref{eq:r not 4}).

\noindent\textbf{Case 2: If the 2-sets in $\B$ form a star}, let those two sets be $\{x,y_i\}$, $1\le i\le t$. Since $\tau(\B)>1$, there exists $B\in\B$ such that $x\not\in B$, and since $\nu(\B)=1$, $y_i\in B$ for all $1\le i\le t$. Thus the $y_i$'s lie in different parts, and hence the number of 2-sets is at most $r-1$. By maximality of $|\F|$, we have $\rho_2=r-1$. In such case, $|\F|$ achieves maximum when 
$$
\B=\{\{x,y_\ell\}:1\le \ell\le r-1\}\cup\{\{y_1,\dots,y_{r-1}\}\},
$$
where $x\in X_r$ and $y_{\ell}\in X_\ell$ for $1\le \ell\le r-1$ (when $r=3$, this reduces to case 1). Hence,
$$
|\F|=\prod_{\ell=1}^{r-1}n_\ell-\prod_{\ell=1}^{r-1}(n_\ell-1)+n_r-1,
$$
satisfying (\ref{eq:r not 4}) and (\ref{eq:r = 4}). This completes the proof of the $s=1$ case.

When $s\ge 2$, since $\rho_1=s-1\ge 1$, $\B$ has at least one singleton $y$. Let $\F'$ be the $r$-partite $r$-graph obtained by deleting all sets containing $y$ from $\F$, and let $\B'$ be the collection of sets obtained by deleting $\{y\}$ from $\B$. Then $\nu(\F')=\nu(\B'\setminus\{\{y\}\})=s-1$. Indeed, if $\F'$ has a matching of size $s$, then $\B'$ has a matching of size $s$, which together with $\{y\}$ form a matching of size $s+1$ of $\B$, contradicting $\nu(\B)=s$. Note that $\tau(\F')\ge \tau(\F)-1> s-1>\nu(\F')$. Thus we may upper bound the size of $|\F|$ by applying inductive hypothesis on $\F'$ and adding back the edges containing $y$.

\noindent\textbf{Case 1: $r\not=4$.}

\noindent\textbf{Case 1.1: If $y$ is contained in a part of size $|X_r|$}, without loss of generality assume $y\in X_r$, then
$$
|\F|\le (s-1)\prod_{\ell=1}^{r-1}n_\ell-\prod_{\ell=1}^{r-1}(n_\ell-1)+(n_r-1)-(s-1)+\prod_{\ell=1}^{r-1}n_\ell=s\prod_{\ell=1}^{r-1}n_\ell-\prod_{\ell=1}^{r-1}(n_\ell-1)+n_r-s,
$$
satisfying (\ref{eq:r not 4}).

\noindent\textbf{Case 1.2: If $y$ is contained in a part of size strictly larger than $|X_r|$}, without loss of generality assume $y\in X_1$, then
 $$
 \begin{aligned}
|\F|&\le (s-1)(n_1-1)\prod_{\ell=2}^{r-1}n_\ell-(n_1-2)\prod_{\ell=2}^{r-1}(n_\ell-1)+n_r-(s-1)+\prod_{\ell=2}^{r}n_\ell\\
&=\l(s\prod_{\ell=1}^{r-1}n_\ell-\prod_{\ell=1}^{r-1}(n_\ell-1)+n_r-s\r)-\prod_{\ell=1}^{r-1}n_\ell-(s-1)\prod_{\ell=2}^{r-1}n_\ell+\prod_{\ell=2}^{r-1}(n_\ell-1)+1+\prod_{\ell=2}^{r}n_\ell\\
&\le s\prod_{\ell=1}^{r-1}n_\ell-\prod_{\ell=1}^{r-1}(n_\ell-1)+n_r-s,
 \end{aligned}
$$
where the last inequality uses $n_1\ge\dots\ge n_r\ge 1$ and $s\ge 2$. This satisfies (\ref{eq:r not 4}).

\noindent\textbf{Case 2: $r=4$.}

Recall that we want
\begin{equation}\label{eq:case2}
|\F|\le \max\{s\prod_{\ell=1}^{3}n_\ell-\prod_{\ell=1}^{3}(n_\ell-1)+n_4-s, n_1(sn_2n_3-(n_2-1)(n_3-1)+n_4-s)\}.   
\end{equation}

\noindent\textbf{Case 2.1: If $y$ is contained in a part of size $|X_4|$}, without loss of generality assume $y\in X_4$, then 
\begin{equation}\label{eq:Case2.1}
\begin{aligned}
|\F|\le \max\{&(s-1)\prod_{\ell=1}^{3}n_\ell-\prod_{\ell=1}^{3}(n_\ell-1)+(n_4-1)-(s-1),\\
&n_1((s-1)n_2n_3-(n_2-1)(n_3-1)+(n_4-1)-(s-1))\}+n_1n_2n_3.  
\end{aligned}
\end{equation}
Subtracting the first term in the max in (\ref{eq:Case2.1}) by the first term in the max in (\ref{eq:case2}), 
$$
\begin{aligned}
&(s-1)\prod_{\ell=1}^{3}n_\ell-\prod_{\ell=1}^{3}(n_\ell-1)+(n_4-1)-(s-1)-\l(s\prod_{\ell=1}^{3}n_\ell-\prod_{\ell=1}^{3}(n_\ell-1)+n_4-s\r)\\
=&-n_1n_2n_3.
\end{aligned}
$$
Subtracting the second term in the max in (\ref{eq:Case2.1}) by the second term in the max in (\ref{eq:case2}), 
$$
\begin{aligned}
&n_1(sn_2n_3-(n_2-1)(n_3-1)+n_4-s)-n_1((s-1)n_2n_3-(n_2-1)(n_3-1)+(n_4-1)-(s-1))\\
=&-n_1n_2n_3
\end{aligned}
$$
Thus (\ref{eq:Case2.1}) is equivalent to (\ref{eq:case2}).

\noindent\textbf{Case 2.2: If $y$ is contained in a part of size strictly larger than $|X_4|$}, there are essentially two cases: $y\in X_1$ and $|X_1|>|X_2|$, or $y\in X_2$ (the case $y\in X_3$ is the same as $y\in X_2$).

\noindent\textbf{Case 2.2.1: When $y\in X_1$ and $|X_1|>|X_2|$}, 
\begin{equation}\label{eq:case2.2.1}
\begin{aligned}
|\F|\le \max\{&(s-1)(n_1-1)n_2n_3-(n_1-2)(n_2-1)(n_3-1)+n_4-(s-1),\\
&(n_1-1)((s-1)n_2n_3-(n_2-1)(n_3-1)+n_4-(s-1))\}+n_2n_3n_4.
\end{aligned}    
\end{equation}

Subtracting the first term in the max in (\ref{eq:case2.2.1}) by the first term in the max in (\ref{eq:case2}),
$$
\begin{aligned}
&(s-1)(n_1-1)n_2n_3-(n_1-2)(n_2-1)(n_3-1)+n_4-(s-1)-\l(s\prod_{\ell=1}^{3}n_\ell-\prod_{\ell=1}^{3}(n_\ell-1)+n_4-s\r)\\
=&-n_1n_2n_3-(s-1)n_2n_3+(n_2-1)(n_3-1)+1\\
\le& -n_2n_3n_4,
\end{aligned}
$$
where the last inequality uses $n_1\ge n_2\ge n_3\ge n_4\ge 1$ and $s\ge 2$.

Subtracting the second term in the max in (\ref{eq:case2.2.1}) by the second term in the max in (\ref{eq:case2}),
$$
\begin{aligned}
&(n_1-1)((s-1)n_2n_3-(n_2-1)(n_3-1)+n_4-(s-1))-n_1(sn_2n_3-(n_2-1)(n_3-1)+n_4-s)\\
=&-((s-1)n_2n_3-(n_2-1)(n_3-1)+n_4-(s-1)) -n_1(n_2n_3-1)\\
=&-n_1n_2n_3-(s-2)n_2n_3+n_1-n_2-n_3-n_4+s-1\\
\le& -n_2n_3n_4,
\end{aligned}
$$
where the last inequality uses $(1+\delta)n_4\ge n_1\ge n_2\ge n_3\ge n_4\ge 1$, $\delta$ being sufficiently small, and $s\ge 2$.

Therefore, (\ref{eq:case2.2.1}) implies (\ref{eq:case2}).

\noindent\textbf{Case 2.2.2: When $y\in X_2$},
\begin{equation}\label{eq:case2.2.2}
\begin{aligned}
|\F|\le \max\{&(s-1)n_1(n_2-1)n_3-(n_1-1)(n_2-2)(n_3-1)+n_4-(s-1),\\
&n_1((s-1)(n_2-1)n_3-(n_2-2)(n_3-1)+n_4-(s-1))\}+n_1n_3n_4.
\end{aligned}    
\end{equation}

Subtracting the first term in the max in (\ref{eq:case2.2.2}) by the first term in the max in (\ref{eq:case2}),
$$
\begin{aligned}
&(s-1)n_1(n_2-1)n_3-(n_1-1)(n_2-2)(n_3-1)+n_4-(s-1)-\l(s\prod_{\ell=1}^{3}n_\ell-\prod_{\ell=1}^{3}(n_\ell-1)+n_4-s\r)\\
=&-n_1n_2n_3-(s-1)n_1n_3+(n_1-1)(n_3-1)+1\\
\le& -n_1n_3n_4,
\end{aligned}
$$
where the last inequality uses $n_1\ge n_2\ge n_3\ge n_4\ge 1$ and $s\ge 2$.

Subtracting the second term in the max in (\ref{eq:case2.2.2}) by the second term in the max in (\ref{eq:case2}),
$$
\begin{aligned}
&n_1((s-1)(n_2-1)n_3-(n_2-2)(n_3-1)+n_4-(s-1))-n_1(sn_2n_3-(n_2-1)(n_3-1)+n_4-s)\\
=&-n_1(n_2n_3+(s-1)n_3-(n_3-1)-1)\\
=&-n_1n_2n_3-(s-2)n_1n_3\\
\le& -n_2n_3n_4,
\end{aligned}
$$
where the last inequality uses $n_1\ge n_4\ge 1$ and $s\ge 2$.

Therefore, (\ref{eq:case2.2.2}) implies (\ref{eq:case2}). This completes the proof.

\end{proof}

\section{Maximum $(r-2)$-intersecting $r$-partite family}\label{sec:r-2}
This section proves Theorem \ref{thm:(r-2)-intersecting}. We first prove the case $r=3$, i.e. Theorem \ref{thm:3-partite intersecting}.

\begin{proof}[Proof of Theorem \ref{thm:3-partite intersecting}]
Let $\F\subset [n_1]\times [n_2]\times [n_3]$ be intersecting and non-trivial. 

\noindent\textbf{Case 1:} Suppose $\F$ contains a copy of $K_3(2)$, i.e. there are three edges $(v_1,v_2,w_3)$, $(v_1,w_2,v_3)$, $(w_1,v_2,v_3)$ such that $v_i\not=w_i$ for $1\le i\le 3$. If there exists an $F\in \F$ such that $F\cap(v_1,v_2,v_3)=\emptyset$, then since $\nu(\F)=1$, we have $F=(w_1,w_2,w_3)$. It is not hard to check that there cannot be any other edges in $\F$, and hence $|\F|=4\le n_1+n_2+n_3-2$. Thus we can assume that every edge $F$ of $\F$ has a non-empty intersection with $(v_1,v_2,v_3)$. For any $F\in \F$, Without loss of generality, assume $F[1]=v_1\not=w_1$. Note that $F\cap (w_1,v_2,v_3)\not=\emptyset$, so either $F[2]=v_2$ or $F[3]=v_3$. This means that every edge $F$ agrees with $(v_1,v_2,v_3)$ in at least two coordinates. The upper bound follows by the fact that the number of such edges in $[n_1]\times [n_2]\times [n_3]$ is at most $(n_1-1)+(n_2-1)+(n_3-1)+1=n_1+n_2+n_3-2$.

\noindent\textbf{Case 2:} Suppose $\F$ is $K_3(2)$-free. Let $F_1=(u_1,u_2,u_3)\in\F$. Assume for contradiction that $e(H)\ge n_1+n_2+n_3-1$. Then there must exist an edge $F_2$ of $H$ intersecting $F_1$ in exactly one vertex (because the number of edges intersecting $F_1$ in at least two vertices is at most $n_1+n_2+n_3-2$). Without loss of generality, let $F_2=(u_1,x_2,x_3)$. For any other edge $F\in\F$, if $F[1]\not=u_1$, then there exists $x_1\not=u_1$ such that $F[1]=x_1$. Since $F$ intersects $F_1$ and $F_2$ in at least one vertex, we have either $e=(x_1,x_2,u_3)$ or $e=(x_1,u_2,x_3)$, either case $F_1,F_2$ and $F$ form a copy of $K_3(2)$, a contradiction. Thus every edge of $\F$ has $u_1$ as first coordinate. This contradicts $\F$ being non-trivial, and hence completes the proof.
\end{proof}

Next, we prove the case $r\ge 3$.

\begin{proof}[Proof of Theorem \ref{thm:(r-2)-intersecting}]
Let $\F\in [n_1]\times\dots\times[n_r]$ be $(r-2)$-intersecting and non-trivial.
We prove by induction on $r$. The case $r=3$ is Theorem \ref{thm:3-partite intersecting}. When $r\ge4$, suppose the statement holds for $r-1$. If there exists a vertex $v$ contained in every edge, then by deleting $v$ from all edges and deleting the part containing $v$, we obtain an $(r-3)$-intersecting nontrivial $(r-1)$-partite $(r-1)$-graph $\F'$. By inductive hypothesis we have 
$$
|\F|=|\F'|\le \sum_{i=1}^r{n_i}-\min_{1\le i\le r}\{n_i\}-(r-1)+1\le  \sum_{i=1}^r{n_i}-r+1.
$$
Thus we can assume no vertex is contained in all edges. There exist two edges $F_1$ and $F_2$ such that $|F_1\cap F_2|=r-2$, because otherwise every edge of $\F$ would contain a fixed set of $r-1$ vertices, contradicting the fact that $\F$ is non-trivial. Without loss of generality, let $F_1=(a_1,a_2, v_3,v_4,\dots, v_r)$ and $F_2=(b_1,b_2,v_3,v_4\dots, v_r)$. For $3\le i\le r$, let $F_i$ be an edge such that $F_i[i]=x_i\not=v_i$ (such an edge exists because no vertex is contained in every edge). Since $|F_3\cap F_1|\ge r-2$ and $|F_3\cap F_2|\ge r-2$, we have either $F_3=(a_1,b_2,x_3,v_4,\dots,v_r)$ or $F_3=(b_1,a_2,x_3,v_4,\dots,v_r)$. Without loss of generality, let $F_3=(a_1,b_2,x_3,v_4,\dots,v_r)$. Then for $i\ge 4$, since $|F_i\cap F_j|\ge r-2$ for every $1\le j\le 3$, we have $F_i=(a_1,b_2,v_3,\dots,v_{i-1},x_i,v_{i+1},\dots,v_r)$. One can check that the edges $F_1,F_2,\dots,F_r$ form a copy of $K^{r-1}_r[2]$, since each of them agree with $(a_1,b_2,v_3,\dots,v_r)$ in exactly $r-1$ coordinates.

For convenience, we relabel the vertices in the copy of $K^{r-1}_r[2]$: for $1\le i\le r$, let $F_i[i]=x_i$ and $F_i[j]=v_j$ for every $j\not=i$ where $x_k\not =v_k$ for every $1\le k\le r$. For any other edge $F$ in $\F$, we will show that $|F\cap (v_1,\dots, v_r)|\ge r-1$. Indeed, if $|F\cap (v_1,\dots, v_r)|\le r-2$, without loss of generality we assume $F[1]\not=v_1$ and $F[2]\not=v_2$. Note that $F_3=(v_1,v_2,\dots)$, and $|F\cap F_3|\ge r-2$, we have $F[i]=F_3[i]$ for every $3\le i\le r$. In particular, $F[3]=F_3[3]=x_3$. By the same argument, we also have $F[3]=F_4[3]=v_3$, which contradicts $x_3\not=v_3$. Thus $|F\cap (v_1,\dots, v_r)|\ge r-1$ holds for every edge $F$ in $H$. It is not hard to check that the number of such edges is at most $\sum_{i=1}^r{n_i}-r+1$, which completes the proof.
\end{proof}

\section{Intersection problem for $n$ sufficiently large}\label{sec:intersection}
This section proves Theorem \ref{thm:t-intersecting}.

Consider two constructions of $t$-intersecting families in $\binom{[r]}{t+1}$. Let 
$$
S^{t+1}_r=\{[t]\cup\{i\}:t+1\le i\le r\},
$$ 
and let 
$$
K^{t+1}_{t+2}=\binom{[t+2]}{t+1}.
$$
The following lemma shows that the maximum $t$-intersecting families of $(t+1)$-sets is achieved by either $S^{t+1}_r$ or $K^{t+1}_{t+2}$.

\begin{lem}\label{lem:t-intersecting (t+1)-graph}
Let $r-2\ge t \ge 1$. Let $\cH\in \binom{[r]}{t+1}$ be a $t$-intersecting family. Then 
$$|\cH|\le \max\{r-t,t+2\},$$
and equality holds only when $\cH\cong S^{t+1}_r$ or $\cH\cong K^{t+1}_{t+2}$.
\end{lem}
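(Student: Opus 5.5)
The plan is to use the classical dichotomy for families of $(t+1)$-sets in which any two members share exactly $t$ elements. Since distinct members of $\cH$ are distinct $(t+1)$-sets with pairwise intersections of size at least $t$, every two of them meet in exactly $t$ elements. The first step is to show that such a family is either a \emph{star}, with all members containing a common $t$-set, or a \emph{clique}, with all members contained in a common $(t+2)$-set. For a star, the members are $T\cup\{i\}$ with $T$ fixed and $i\in[r]\setminus T$ distinct, so $|\cH|\le r-t$, with equality iff $\cH\cong S^{t+1}_r$. For a clique, the members are $(t+1)$-subsets of a fixed $(t+2)$-set, so $|\cH|\le t+2$, with equality iff $\cH\cong K^{t+1}_{t+2}$. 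This gives both the bound and the equality characterization.

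For the dichotomy, the case $|\cH|\le 2$ is trivial: two such sets are simultaneously a star and a clique. So assume $|\cH|\ge 3$ and take $A,B\in\cH$, which satisfy $|A\cap B|=t$ and $|A\cup B|=t+2$. Any $C\in\cH$ meets both $A$ and $B$ in $t$ elements, and this forces one of two options.
\begin{itemize}
\item If $C\supseteq A\cap B$, then $C=(A\cap B)\cup\{c\}$ for some $c$.
\item If $C\not\supseteq A\cap B$, then $C$ misses exactly one element of $A\cap B$. To meet $A$ in $t$ elements it must contain $A\setminus B$, and likewise $B\setminus A$, so $C\subseteq A\cup B$.
\end{itemize}
Now suppose $\cH$ contains some $C$ of the first type with $c\notin A\cup B$, and some $D$ of the second type. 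Then $C\cap D=(A\cap B)\cap D$, which has size $t-1$. This is a contradiction.

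Hence either every member other than $A,B$ is of the first type, giving a star with center $A\cap B$, or every member lies inside $A\cup B$, giving a clique. The one overlap is a first-type set with $c\in A\cup B$, which is just $A$ or $B$ itself; this needs to be stated explicitly. The same argument applied to every member at once yields the global dichotomy.

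The main, though mild, obstacle is the equality characterization when $r-t=t+2$, and small cases such as $|\cH|=3$ where a triangle is both types. When $t+2=r-t$ both extremal families have the same size, and the argument above still classifies them as $S^{t+1}_r$ or $K^{t+1}_{t+2}$ up to isomorphism, so I would simply note that either form is allowed.
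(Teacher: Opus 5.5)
Your proof is correct, but it is organized differently from the paper's.

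\textbf{The paper's route.} It inducts on $t$ and splits into two cases.
\begin{itemize}
\item If some vertex lies in every member of $\cH$, it deletes that vertex. This gives a $(t-1)$-intersecting family of $t$-sets on $[r-1]$, and the hypothesis yields $|\cH|\le\max\{r-t,t+1\}$, with equality forcing $S^{t+1}_r$.
\item If no vertex is common, it places two members as $[t+2]\setminus\{1\}$ and $[t+2]\setminus\{2\}$. For each $i$ in their intersection it takes a member avoiding $i$ and forces that member to be $[t+2]\setminus\{i\}$. It then excludes any member not contained in $[t+2]$.
\end{itemize}

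\textbf{Your route.} You prove the full structural dichotomy in one pass, with no induction and no base case. Relative to a fixed pair $A,B$, every member either contains $A\cap B$ or lies inside $A\cup B$. A member $(A\cap B)\cup\{c\}$ with $c\notin A\cup B$ cannot coexist with a member $D$ of the second type, since their intersection is $(A\cap B)\cap D$, of size $t-1$.

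\textbf{Comparison.} Your computation that a second-type member misses exactly one element of $A\cap B$ and contains both $A\setminus B$ and $B\setminus A$ is essentially the paper's Case 2 forcing step. The difference is that you apply it to all members at once. This proves more than the lemma asks: every $t$-intersecting family of $(t+1)$-sets is contained either in a star with a common $t$-set or in $\binom{Y}{t+1}$ for some $(t+2)$-set $Y$. The size bound and the equality cases then drop out immediately. The paper's inductive version matches the vertex-deletion induction it also uses for $(r-2)$-intersecting families, but it tracks only sizes and extremal configurations rather than the general structure.

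\textbf{A minor inaccuracy.} In your closing remark, a triangle $\{\{1,2\},\{1,3\},\{2,3\}\}$ is a clique but not a star, since it has no common vertex. A family is of both types only when it has at most two members: if it contains a common $t$-set $T$ and lies in a $(t+2)$-set $Y$, its members are $T\cup\{i\}$ with $i\in Y\setminus T$. This aside plays no role in your argument. Equality never arises for such families anyway, because $\max\{r-t,t+2\}\ge 3$.
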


\begin{proof}
We will prove by induction on $t\ge1$. When $t=1$, it is clear that a maximum intersecting family of 2-sets on $[r]$ has size $\max\{r-1,3\}$, achieved only by a star or a triangle. When $t\ge 2$, let $\cH$ be a $t$-intersecting family of $(t+1)$-sets on $[r]$.

\noindent\textbf{Case 1:} there exists a vertex contained in every edge of $\cH$. Without loss of generality we assume that this common vertex is $r$. Deleting $r$ from each edge of $\cH$ produces a $(t-1)$-intersecting family $\cH'$ of $t$-sets on $[r-1]$. By the inductive hypothesis, 
$$
|\cH|=|\cH'|\le \max\{(r-1)-(t-1),(t-1)+2\}\le \max\{r-t, t+2\}.
$$
Equality holds only when $|\cH|=|\cH'|=r-t$, and if that happens, then $\cH'\cong S^t_{r-1}$ and hence $\cH\cong S^{t+1}_r$.

\noindent\textbf{Case 2:} no vertex lies in every edge of $\cH$. Without loss of generality, let $E_1=[t+2]\setminus\{1\}$ and $E_2=[t+2]\setminus\{2\}$ be two edges in $\cH$. For each $3\le i\le t+2$, there exists an edge $E_i$ not containing $i$ (otherwise $i$ would belong to every edge, contradicting the assumption). Note that $E_i$ must intersect $E_1$ in $t$ elements, all but at most one vertex in $E_1$ is not contained in $E_i$, it follows that $E_1\setminus\{i\}\subseteq E_i$. Similarly $E_2\setminus\{i\}\subseteq E_i$. These two inclusions force $E_i=[t+2]\setminus\{i\}$. Thus every $3\le i\le t+2$ yields the edge $E_i=[t+2]\setminus\{i\}$. Note that no other $(t+1)$-subset of $[r]$ can belong to $\cH$ because any additional distinct $(t+1)$-set would fail to be $t$-intersecting with some existing one. Therefore $|\cH|\le t+2$, with equality only when $\cH\cong K^{t+1}_{t+2}$. 
\end{proof}

We are now ready to prove Theorem~\ref{thm:t-intersecting}.
\begin{proof}[Proof of Theorem \ref{thm:t-intersecting}]
By Lemma~\ref{lem:not_shifted}, we may assume that $\F$ is coordinate-wise shifted and has maximum size. Then $\P(\F)$ is non-trivial $t$-intersecting by Lemma~\ref{lem:projection_maintain_intersection}. Let $\cH_i$ be the collection of $i$-sets in $\P(\F)$ and let $\rho_i=|\cH_i|$. Since $\F$ has maximum size, we have
\[
    |\F|=\sum_{i=1}^{r}\rho_i(n-1)^{r-i}.
\]
Since $\P(\F)$ is non-trivial $t$-intersecting, $\rho_i=0$ for all $1\le i\le t$. Thus, as $n\rightarrow\infty$, the leading term in $|\F|$ is $\rho_{t+1}n^{r-t-1}$. Note that $\cH_{t+1}$ is $t$-intersecting. By Lemma~\ref{lem:t-intersecting (t+1)-graph}, $\rho_{t+1}\le \max\{r-t,t+2\}$, and this maximum is achieved only by $S^{t+1}_r$ or $K^{t+1}_{t+2}$. We consider these two cases.

\noindent\textbf{Case 1:} $\cH_{t+1}\cong S^{t+1}_r$ and hence $\rho_{t+1}=r-t$. Recall that $S^{t+1}_r=\{[t]\cup\{\ell\}:t+1\le \ell\le r\}$. For any $t+2\le i\le r-2$, if $E\in \cH_i$, then $E$ must intersect $[t]\cup\{\ell\}$ in at least $t$ elements for every $t+1\le \ell\le r$. This condition implies $[t]\subset E$. Thus,
\[
    \rho_i\le \binom{r-t}{i-t}
\]
for each $t+2\le i\le r-2$. Together with $\rho_{r-1}\le r$ and $\rho_r\le 1$, we have
\[
    |\F|\le \sum_{i=t+1}^{r-2}\binom{r-t}{i-t}(n-1)^{r-i}+r(n-1)+1 = n^{r-t}-(n-1)^{r-t}+t(n-1),
\]
where equality holds only when $\F=W_{r,t}(n)$.

\noindent\textbf{Case 2:} $\cH_{t+1}\cong K^{t+1}_{t+2}$ and hence $\rho_{t+1}=t+2$. Recall that $K^{t+1}_{t+2}=\binom{[t+2]}{t+1}$. For any $t+2\le i\le r$ and $E\in \cH_i$, $E$ intersects every $F\in \binom{[t+2]}{t+1}$ in at least $t$ elements, which forces $|E\cap[t+2]|\ge t+1$. Thus,
\[
    \rho_i\le (t+2)\binom{r-t-2}{i-t-1}+\binom{r-t-2}{i-t-2}
\]
for every $t+2\le i\le r$. Hence,
\[
    |\F|\le \sum_{i=t+1}^{r}\left((t+2)\binom{r-t-2}{i-t-1}+\binom{r-t-2}{i-t-2}\right)n^{r-i}=(t+2)n^{r-t-1}-(t+1)n^{r-t-2},
\]
and equality holds only when $\F=K_{r,t}(n)$.
\end{proof}

\bibliographystyle{amsplain}
\bibliography{refs.bib}

@article{hou2026non,
  title={Non-trivial Intersection Problems for Multi-part Hypergraphs},
  author={Hou, Jianfeng and Hu, Caiyun},
  journal={arXiv preprint arXiv:2606.06208},
  year={2026}
}

@misc{bolzano2026casestudies,
  title = {{Bolzano}: Case Studies in {LLM}-Assisted Mathematical Research},
  author = {Balko, Martin and Greb{\'{\i}}k, Jan and Hub{\'a}{\v c}ek, Pavel and Kouteck{\'y}, Martin and Kripner, Mat{\v e}j and Rozho{\v n}, V{\'a}clav and {\v S}{\'a}mal, Robert and Z{\'a}me{\v c}n{\'{\i}}k, Adri{\'a}n},
  year = {2026},
  eprint = {2604.16989},
  archivePrefix = {arXiv},
  primaryClass = {cs.CL},
  doi = {10.48550/arXiv.2604.16989},
  url = {https://arxiv.org/abs/2604.16989}
}

@article{erdos1960intersection,
  title={Intersection theorems for systems of sets},
  author={Erd{\"o}s, Paul and Rado, Richard},
  journal={Journal of the London Mathematical Society},
  volume={1},
  number={1},
  pages={85--90},
  year={1960},
  publisher={Wiley Online Library}
}

@incollection {frankl1987shifting,
    AUTHOR = {Peter Frankl},
     TITLE = {The shifting technique in extremal set theory},
 BOOKTITLE = {Surveys in combinatorics 1987 ({N}ew {C}ross, 1987)},
    SERIES = {London Math. Soc. Lecture Note Ser.},
    VOLUME = {123},
     PAGES = {81--110},
 PUBLISHER = {Cambridge Univ. Press, Cambridge},
      YEAR = {1987},
      ISBN = {0-521-34805-6},
   MRCLASS = {05A05},
  MRNUMBER = {905277},
MRREVIEWER = {E.\ C.\ Milner},
}

@article{frankl1999erdos,
  title={The Erdos-Ko-Rado theorem for integer sequences},
  author={Frankl, Peter and Tokushige, Norihide},
  journal={Combinatorica},
  volume={19},
  number={1},
  pages={55--64},
  year={1999},
  publisher={Budapest: Akademiai Kiado,[1981-}
}

@article{ahlswede1997complete,
  title={The complete intersection theorem for systems of finite sets},
  author={Ahlswede, Rudolf and Khachatrian, Levon H},
  journal={European journal of combinatorics},
  volume={18},
  number={2},
  pages={125--136},
  year={1997},
  publisher={Academic Press}
}

@inproceedings{frankl1978erdos,
  title={The Erdos-Ko-Rado theorem is true for n= ckt},
  author={Frankl, Peter},
  booktitle={Combinatorics (Proc. Fifth Hungarian Colloq., Keszthely, 1976)},
  volume={1},
  pages={365--375},
  year={1978}
}

@article{frankl1980erdos,
  title={The Erd{\"o}s-Ko-Rado theorem for integer sequences},
  author={Frankl, Peter and F{\"u}redi, Zolt{\'a}n},
  journal={SIAM Journal on Algebraic Discrete Methods},
  volume={1},
  number={4},
  pages={376--381},
  year={1980},
  publisher={SIAM}
}

@article{Erdos1965Matching,
  author    = {Paul Erdős},
  title     = {A problem on independent $r$-tuples},
  journal   = {Ann. Univ. Sci. Budapest. Eötvös Sect. Math.},
  volume    = {8},
  year      = {1965},
  pages     = {93--95}
}

@article{HiltonMilner1967,
  author    = {A. J. W. Hilton and E. C. Milner},
  title     = {Some intersection theorems for systems of finite sets},
  journal   = {Quarterly Journal of Mathematics, Oxford Series (2)},
  volume    = {18},
  year      = {1967},
  pages     = {369--384}
}

@article{ErdosKoRado1961,
  author    = {Paul Erdős and Chao Ko and Richard Rado},
  title     = {Intersection theorems for systems of finite sets},
  journal   = {Quarterly Journal of Mathematics, Oxford Series (2)},
  volume    = {12},
  year      = {1961},
  pages     = {313--320}
}

@inproceedings{meyer1974quelques,
  title={Quelques problemes concernant les cliques des hypergraphes h-complets et q-parti h-complets},
  author={Meyer, J},
  booktitle={Hypergraph Seminar},
  pages={127--139},
  year={1974},
  organization={Springer}
}

@article{Konig1931,
  author    = {Dénes Kőnig},
  title     = {Gráfok és mátrixok},
  journal   = {Matematikai és Fizikai Lapok},
  volume    = {38},
  year      = {1931},
  pages     = {116--119},
  language  = {Hungarian}
}

@article {frankl2012disjoint,
    AUTHOR = {Frankl, Peter},
     TITLE = {Disjoint edges in separated hypergraphs},
   JOURNAL = {Mosc. J. Comb. Number Theory},
  FJOURNAL = {Moscow Journal of Combinatorics and Number Theory},
    VOLUME = {2},
      YEAR = {2012},
    NUMBER = {4},
     PAGES = {19--26},
      ISSN = {2220-5438,2640-7361},
   MRCLASS = {05D05 (05C35 05C65 05C70 05D15)},
  MRNUMBER = {3065278},
MRREVIEWER = {J\'ozsef\ Balogh},
}

@article{deza1983erdos,
  title={Erd{\"o}s--ko--rado theorem—22 years later},
  author={Deza, M and Frankl, P},
  journal={SIAM Journal on Algebraic Discrete Methods},
  volume={4},
  number={4},
  pages={419--431},
  year={1983},
  publisher={SIAM}
}

@article{lu2024matching,
  title={Matching stability for 3-partite 3-uniform hypergraphs},
  author={Lu, Hongliang and Ma, Xinxin},
  journal={arXiv preprint arXiv:2410.15673},
  year={2024}
}

\end{document}